\renewenvironment{abstract}{%
  \ifx\maketitle\relax
    \ClassWarning{\@classname}{Abstract should precede
      \protect\maketitle\space in AMS document classes; reported}%
  \fi
  \global\setbox\abstractbox=\vtop \bgroup
    \normalfont
    \list{}{\labelwidth\z@
      \leftmargin0pc \rightmargin\leftmargin
      \listparindent\normalparindent \itemindent\z@
      \parsep\z@ \@plus\p@
      
    }%
    \item[\hskip\labelsep\bfseries\abstractname:]%
}{%
  \endlist\egroup
  \ifx\@setabstract\relax \@setabstracta \fi
}
\def\@setabstracta{%
  \ifvoid\abstractbox
  \else
    \skip@16\p@ \advance\skip@-\lastskip
    \advance\skip@-\baselineskip \vskip\skip@
    \box\abstractbox
    \prevdepth\z@ 
  \fi
}
\def\@setsubjclass{%
\noindent\textbf{AMS Subj.~Classification:}\enspace\@subjclass}
\renewcommand{\keywordsname}{Key words}
\def\@setkeywords{%
  \par\noindent{\bfseries \keywordsname:}\enspace \@keywords\@addpunct.}
\def\maketitle{\par
  \@topnum\z@ 
  \@setcopyright
  \thispagestyle{empty}
  \uppercasenonmath\shorttitle
  \ifx\@empty\shortauthors \let\shortauthors\shorttitle
  \else \andify\shortauthors
  \fi
  \@maketitle@hook
  \begingroup
  \@maketitle
  \toks@\@xp{\shortauthors}\@temptokena\@xp{\shorttitle}%
  \toks4{\def\\{ \ignorespaces}}
  \edef\@tempa{%
    \@nx\markboth{\the\toks4
      \@nx\MakeUppercase{\the\toks@}}{\the\@temptokena}}%
  \@tempa
  \endgroup
  \c@footnote\z@
  \@cleartopmattertags
}
\def\@setauthors{%
  \begingroup
  \def\thanks{\protect\thanks@warning}%
  \trivlist
  \centering\footnotesize \@topsep24\p@\relax
  \advance\@topsep by -\baselineskip
  \item\relax
  \author@andify\authors
  \def\\{\protect\linebreak}%
  \MakeUppercase{\authors}%
  \ifx\@empty\contribs
  \else
    ,\penalty-3 \space \@setcontribs
    \@closetoccontribs
  \fi
  \endtrivlist
  \endgroup
}
\def\emailaddrname{{e-mail}}
\def\@setaddresses{\par
  \nobreak \begingroup\centering\footnotesize
  \def\author##1{\nobreak\addvspace\bigskipamount}%
  \interlinepenalty\@M
  \def\address##1##2{\begingroup
    \par\vspace{3pt}
    \@ifnotempty{##1}{(\ignorespaces##1\unskip) }%
    {\ignorespaces##2}\par\endgroup}%
  \def\curraddr##1##2{\begingroup
    \@ifnotempty{##2}{\nobreak\indent\curraddrname
      \@ifnotempty{##1}{, \ignorespaces##1\unskip}\/:\space
      ##2\par}\endgroup}%
  \def\email##1##2{\begingroup
    \@ifnotempty{##2}{\nobreak\indent\emailaddrname
      \@ifnotempty{##1}{, \ignorespaces##1\unskip}\/:\space
      \ttfamily##2\par}\endgroup}%
  \def\urladdr##1##2{\begingroup
    \def~{\char`\~}%
    \@ifnotempty{##2}{\nobreak\indent\urladdrname
      \@ifnotempty{##1}{, \ignorespaces##1\unskip}\/:\space
      \ttfamily##2\par}\endgroup}%
  \addresses
  \endgroup
}
\def\@maketitle{%
  \normalfont\normalsize
  \@adminfootnotes
  \@mkboth{\@nx\shortauthors}{\@nx\shorttitle}%
  \vspace*{3.5cm}
  \@settitle
  \ifx\@empty\authors \else \@setauthors \fi
  \ifx\@empty\addresses \else\@setaddresses\fi
  \smallskip
  \ifx\@empty\@dedicatory
  \else
    \baselineskip18\p@
    \vtop{\centering{\footnotesize\itshape\@dedicatory\@@par}%
      \global\dimen@i\prevdepth}\prevdepth\dimen@i
  \fi
  \@setabstract
  \@setsubjclass
  \@setkeywords
  \normalsize
  \if@titlepage
    \newpage
  \else
    \dimen@10\p@ \advance\dimen@-\baselineskip
    \vskip\dimen@\relax
  \fi
} 
\def\@adminfootnotes{%
  \let\@makefnmark\relax  \let\@thefnmark\relax
  \ifx\@empty\@date\else \@footnotetext{\@setdate}\fi
  \ifx\@empty\thankses\else \@footnotetext{%
    \def\par{\let\par\@par}\@setthanks}%
  \fi
}
\def\section{\@startsection{section}{1}%
  \z@{.7\linespacing\@plus\linespacing}{.5\linespacing}%
  {\normalfont\bfseries\centering}}
\def\@secnumfont{\bfseries}
\def\enddoc@text{}
\newtheoremstyle{theorem}
{10pt} 
{10pt} 
{\slshape} 
{\parindent} 
{\bfseries} 
{. } 
{ } 
{} 
\newtheoremstyle{remark}
{10pt} 
{10pt} 
{\upshape} 
{\parindent} 
{\bfseries} 
{. } 
{ } 
{} 
\theoremstyle{theorem}
\newtheorem{theorem}{Theorem}[section]
\newtheorem{lemma}[theorem]{Lemma}
\newtheorem{proposition}[theorem]{Proposition}
\theoremstyle{remark}
\newtheorem{remark}{Remark}[section]
\newtheorem{example}{Example}[section]
\numberwithin{equation}{section}
\DeclareMathOperator{\diag}{diag}
\begin{document}
\title[Sturm-Liouville problem and Polar Representation]{The Sturm-Liouville problem and the Polar Representation Theorem}
\author{Jorge Rezende}
\address{Grupo de F\'{i}sica-Matem\'{a}tica da Universidade de Lisboa\\
Av. Prof. Gama Pinto 2, 1649-003 Lisboa, PORTUGAL\\
and\\
Departamento de Matem\'{a}tica,\\
Faculdade de Ci\^{e}ncias da Universidade de Lisboa}
\email{rezende@cii.fc.ul.pt}
\subjclass[2000]{34B24, 34C10, 34A30}
\keywords{Sturm-Liouville theory, Hamiltonian systems, polar representation}
\dedicatory{Dedicated to the memory of Professor Ruy Lu\'{i}s Gomes}

\begin{abstract}
The polar representation theorem for the $n$-dimensional time-dependent
linear Hamiltonian system 
\begin{equation*}
\dot{Q}=BQ+CP\text{, \ }\dot{P}=-AQ-B^{*}P\text{,}
\end{equation*}
with continuous coefficients, states that, given two isotropic solutions $%
\left( Q_1,P_1\right) $ and $\left( Q_2,P_2\right) $, with the identity
matrix as Wronskian, the formula 
\begin{equation*}
Q_2=r\cos \varphi \text{, \ }Q_1=r\sin \varphi \text{,}
\end{equation*}
holds, where $r$ and $\varphi $ are continuous matrices, $\det r\neq 0$ and $%
\varphi $ is symmetric.

In this article we use the monotonicity properties of the matrix $\varphi $
eigenvalues in order to obtain results on the Sturm-Liouville problem.
\end{abstract}
\maketitle

\section{Introduction}

\noindent Let $n=1,2,\ldots $. In this article, $\left( .,.\right) $ denotes the
natural inner product in $\mathbb{R}^{n}$. For $x\in \mathbb{R}^{n}$ one writes $%
x^{2}=\left( x,x\right) $, $\left| x\right| =\left( x,x\right) ^{\frac{1}{2}%
} $. If $M$ is a real matrix, we shall denote $M^{\ast }$ its transpose. $%
M_{jk}$ denotes the matrix entry located in row $j$ and column $k $. $I_{n}$
is the identity $n\times n$ matrix. $M_{jk}$ can be a matrix. For example, $%
M $ can have the four blocks $M_{11}$, $M_{12}$, $M_{21}$, $M_{22}$. In a
case like this one, if $M_{12}=M_{21}=0$, we write $M=\diag\left(
M_{11},M_{22}\right) $.

\subsection{The symplectic group and the polar representation theorem}
\ \\
\noindent Consider the time-dependent linear Hamiltonian system 
\begin{equation}
\dot{Q}=BQ+CP\text{, }\dot{P}=-AQ-B^{\ast }P\text{,}  \label{J1}
\end{equation}
where $A$, $B$ and $C$ are time-dependent $n\times n$ matrices. $A$ and $C$
are symmetric. The dot means time derivative, the derivative with respect to 
$\tau $. The time variable $\tau $ belongs to an interval. Without loss of
generality we shall assume that this interval is $\left[ 0,T\right[ $,
$T>0$. $T$ can be $\infty $. In the following $t$, $0<t<T$, is also a time
variable and $\tau \in \left[ 0,t\right] $.

If $( Q_1,P_1) $ and $( Q_2,P_2) $ are solutions of 
\eqref{J1} one denotes the Wronskian (which is constant) by
\begin{equation*}
W( Q_1,P_1;Q_2,P_2) \equiv W=P_1^{*}Q_2-Q_1^{*}P_2\text{.}
\end{equation*}

A solution $( Q,P) $ of \eqref{J1} is called isotropic if 
$W( Q,P;Q,P) =0$. From now on $( Q_1,P_1) $ and $(
Q_2,P_2) $ will denote two isotropic solutions of \eqref{J1} such that 
$W( Q_1,P_1;Q_2,P_2) =I_n$. This means that 
\begin{equation*}
P_1^{*}Q_2-Q_1^{*}P_2=I_n\text{ , }P_1^{*}Q_1=Q_1^{*}P_1\text{ , }%
P_2^{*}Q_2=Q_2^{*}P_2\text{.}
\end{equation*}

These relations express precisely that, for each $\tau \in \left[ 0,T\right[ 
$ the $2n\times 2n$ matrix 
\begin{equation}
\Phi =
\begin{bmatrix}
Q_{2} & Q_{1} \\ 
P_{2} & P_{1}
\end{bmatrix}
\label{n}
\end{equation}
is symplectic. Its left inverse and, therefore, its inverse, is given by 
\begin{equation*}
\Phi ^{-1}=
\begin{bmatrix}
P_{1}^{\ast } & -Q_{1}^{\ast } \\ 
-P_{2}^{\ast } & Q_{2}^{\ast }
\end{bmatrix}
\text{.}
\end{equation*}

As it is well-known the $2n\times 2n$ symplectic matrices form a group, the
symplectic group.

Then, one has 
\begin{equation*}
P_1Q_2^{*}-P_2Q_1^{*}=I_n,\quad Q_1Q_2^{*}=Q_2Q_1^{*},\quad
P_1P_2^{*}=P_2P_1^{*},
\end{equation*}
and, therefore, 
\begin{equation*}
Q_2^{*}P_1-P_2^{*}Q_1=I_n,\quad Q_2P_1^{*}-Q_1P_2^{*}=I_n\text{,}
\end{equation*}
and the following matrices, whenever they make sense, are symmetric 
\begin{equation*}
\begin{aligned}
&P_2Q_2^{-1},\quad Q_1P_1^{-1},\quad Q_2P_2^{-1},\quad P_1Q_1^{-1}\text{,} \\
&Q_2^{-1}Q_1,\quad P_2^{-1}P_1,\quad Q_1^{-1}Q_2,\quad P_1^{-1}P_2\text{.}
\end{aligned}
\end{equation*}

Denote by $J$, $S$ and $M$, the following $2n\times 2n$ matrices 
\begin{equation*}
J=
\begin{bmatrix}
0 & -I_{n} \\ 
I_{n} & 0
\end{bmatrix},\quad
S=
\begin{bmatrix}
A & B^{\ast } \\ 
B & C
\end{bmatrix}
\text{,}
\end{equation*}
and $M=-JS$. $J$ is symplectic and $S$ is symmetric.

One says that the $2n\times 2n$ matrix $L$ is antisymplectic if $LJL^{\ast
}=-J$. Notice that the product of two antisymplectic matrices is symplectic,
and that the product of an antisymplectic matrix by a symplectic one is
antisymplectic. We shall use this definition later.

Notice that if $n=1$ and $L$ is a $2\times 2$ matrix, then one has 
$LJL^{\ast }=\left( \det L\right) J$.

Equation \eqref{J1} can then be written 
\begin{equation*}
\dot{\Phi}=M\Phi \text{.}
\end{equation*}

Notice that, if $\Phi $ is symplectic, $\Phi ^{*}$ is symplectic, and 
\begin{equation*}
\Phi ^{-1}=-J\Phi ^{*}J\text{, \ }\Phi ^{*}J\Phi =J\text{, \ }\Phi J\Phi
^{*}=J\text{.}
\end{equation*}

When we have a $C^1$ function $\tau \longmapsto \Phi \left( \tau \right) $, 
$\dot{\Phi}J\Phi ^{*}+\Phi J\dot{\Phi}^{*}=0$. Hence, $\dot{\Phi}J\Phi ^{*}$
is symmetric and one can recover $M$: 
\begin{equation*}
M=\dot{\Phi}\Phi ^{-1}=-\dot{\Phi}J\Phi ^{*}J\text{.}
\end{equation*}

This means that from $\Phi $ one can obtain $A$, $B$, and $C$: 
\begin{equation*}
\begin{aligned}
A&=\dot{P}_{1}P_{2}^{\ast }-\dot{P}_{2}P_{1}^{\ast }\text{,\quad}C=\dot{Q}%
_{1}Q_{2}^{\ast }-\dot{Q}_{2}Q_{1}^{\ast }\text{,}\\
B&=-\dot{Q}_{1}P_{2}^{\ast }+\dot{Q}_{2}P_{1}^{\ast }=Q_{1}\dot{P}_{2}^{\ast
}-Q_{2}\dot{P}_{1}^{\ast }\text{.}
\end{aligned}
\end{equation*}

The proof of the following theorem on a polar representation can be found in 
\cite{rezende1}. See also \cite{rezende2}, \cite{rezende4}.

\begin{theorem}
\label{T1}Assume that $C\left( \tau \right) $ is always $>0$ (or always $<0$%
) and of class $C^1$. Consider two isotropic solutions of \eqref{J1}, $%
\left( Q_1,P_1\right) $ and $\left( Q_2,P_2\right) $, such that $W=I_n$.
Then, there are $C^1$ matrix-valued functions $r\left( \tau \right) $, $%
\varphi \left( \tau \right) $, for $\tau \in \left[ 0,T\right[ $, such that:
a) $\det r\left( \tau \right) \neq 0$ and $\varphi \left( \tau \right) $ is
symmetric for every $\tau $; b) the eigenvalues of $\varphi $ are $C^1$
functions of $\tau $, with strictly positive (negative) derivatives; c) one
has 
\begin{equation*}
Q_2\left( \tau \right) =r\left( \tau \right) \cos \varphi \left( \tau
\right) \text{ \ and \ }Q_1\left( \tau \right) =r\left( \tau \right) \sin
\varphi \left( \tau \right) \text{.}
\end{equation*}
\end{theorem}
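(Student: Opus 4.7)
The plan is to recast the theorem as a complex polar decomposition of $W := Q_{2} + iQ_{1}$, writing $W^{*} = \bar{W}^{T}$ for the Hermitian conjugate (which agrees with the paper's definition on real matrices). First I would establish that $W(\tau)$ is invertible on $[0,T[$: setting $V := P_{2} + iP_{1}$, the isotropy relations together with the Wronskian condition yield the ``complex Wronskian''
\[
W^{*}V - V^{*}W = 2iI_{n},
\]
so $Wv = 0$ with $v \neq 0$ would force $0 = v^{*}(W^{*}V - V^{*}W)v = 2i|v|^{2}$, a contradiction.

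Next I would construct $r$ and $\varphi$ from a polar form of $W$. The symmetry $Q_{1}Q_{2}^{*} = Q_{2}Q_{1}^{*}$ implies that $WW^{*}$ and $\bar{W}W^{T}$ both equal the real symmetric matrix $Q_{2}Q_{2}^{*} + Q_{1}Q_{1}^{*}$, so the matrix $U := \bar{W}^{-1}W$ is symmetric and unitary. A complex symmetric unitary $U$ may be written $A + iB$ with commuting real symmetric $A, B$ satisfying $A^{2} + B^{2} = I_{n}$; hence $U$ is orthogonally diagonalizable over $\mathbb{R}$ with unit-modulus eigenvalues and admits a real symmetric $\varphi$ with $e^{2i\varphi} = U$. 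Defining $r := We^{-i\varphi}$, the relation $\bar{W} = WU^{-1} = We^{-2i\varphi}$ yields $\bar{r} = \bar{W}e^{i\varphi} = We^{-i\varphi} = r$, so $r$ is real, and it is invertible because $W$ is. Splitting $W = re^{i\varphi}$ into real and imaginary parts then gives exactly $Q_{2} = r\cos\varphi$ and $Q_{1} = r\sin\varphi$.

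For the eigenvalue monotonicity, pick a $C^{1}$ normalized eigenvector $v(\tau)$ of $\varphi$ with $\varphi v = \lambda v$; then $Uv = e^{2i\lambda}v$, and since $U$ is symmetric a standard perturbation argument gives $v^{T}\dot{U}v = 2i\dot{\lambda}e^{2i\lambda}$. Differentiating $U = \bar{W}^{-1}W$ using $\dot{W} = BW + CV$ and $W = \bar{W}U$, the $B$-terms cancel and
\[
\dot{U} = \bar{W}^{-1}C(V - \bar{V}U).
\]
The key auxiliary identity $P_{1}\cos\varphi - P_{2}\sin\varphi = r^{-T}$, which is equivalent via $Q_{2} = r\cos\varphi$ and $Q_{1} = r\sin\varphi$ to the Wronskian relation $Q_{2}P_{1}^{*} - Q_{1}P_{2}^{*} = I_{n}$, then produces $(V - \bar{V}U)v = 2ie^{i\lambda}r^{-T}v$. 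Combining this with $\bar{W}^{-T}v = e^{i\lambda}r^{-T}v$ (which follows immediately from $W = re^{i\varphi}$) collapses everything to
\[
\dot{\lambda} = v^{T}r^{-1}Cr^{-T}v = (r^{-T}v)^{T}C(r^{-T}v),
\]
which is strictly positive (respectively negative) whenever $C > 0$ (respectively $C < 0$), since $r$ is invertible.

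The main obstacle I anticipate is the globalization of $\varphi$: the equation $e^{2i\varphi} = U$ determines $\varphi$ only modulo $\pi$ on each eigenspace, so a consistent $C^{1}$ branch must be chosen across all of $[0,T[$. The strict monotonicity of each eigenvalue established above is precisely what resolves this --- no eigenvalue of $\varphi$ can wind around on itself, so continuity of the eigenvalues of the $C^{1}$ path $U(\tau)$ uniquely determines the branch once $\varphi(0)$ is specified. Joint $C^{1}$ regularity of $\varphi$, and hence of $r = We^{-i\varphi}$, then follows from the smoothness of the spectral calculus applied to the $C^{1}$ path $U(\tau)$ of symmetric unitaries.
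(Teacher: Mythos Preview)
The paper does not actually prove this theorem; immediately before the statement it says the proof can be found in \cite{rezende1}, and afterwards it only records the differential system for $(r_0,s)$ and the equations \eqref{k} for $\varphi$, which suggest that the cited argument constructs $r$ and $\varphi$ by solving ODEs rather than by an algebraic decomposition. So there is no in-paper proof to compare against, and your route through the complex matrix $W=Q_2+iQ_1$ is a genuinely different (and attractive) approach.

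Your computations are correct: the identity $W^{*}V-V^{*}W=2iI_n$ does give invertibility of $W$; the relation $Q_1Q_2^{*}=Q_2Q_1^{*}$ does make $WW^{*}$ real and hence force $U:=\bar W^{-1}W$ to be symmetric unitary; the identity $P_1\cos\varphi-P_2\sin\varphi=r^{-T}$ is exactly the transpose of $Q_2P_1^{*}-Q_1P_2^{*}=I_n$; and the chain $\dot U=\bar W^{-1}C(V-\bar V U)$, $(V-\bar V U)v=2ie^{i\lambda}r^{-T}v$, $\bar W^{-T}v=e^{i\lambda}r^{-T}v$ leading to $\dot\lambda=(r^{-T}v)^{T}C(r^{-T}v)$ is right and gives the sign required in (b). This is, incidentally, the diagonal part of the paper's equation \eqref{k}.

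The gap is the one you flag yourself, but the fix you propose does not close it. Monotonicity guarantees that each eigenvalue curve of $U$ lifts uniquely from $S^{1}$ to $\mathbb{R}$, so the \emph{eigenvalues} of $\varphi$ are well-defined continuous functions once $\varphi(0)$ is fixed. What it does not give is the \emph{matrix} $\varphi$ at a time $\tau_0$ where two of these lifted eigenvalues satisfy $\lambda_j(\tau_0)-\lambda_k(\tau_0)=m\pi$ with $m\neq 0$: there the corresponding eigenvalues of $U$ coincide, the eigenspace of $U(\tau_0)$ is at least two-dimensional, and $U(\tau_0)$ carries no information about how to split it between $\lambda_j$ and $\lambda_k$. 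Your appeal to ``smoothness of the spectral calculus'' fails exactly here, since the half-logarithm you need is not single-valued on the spectrum of $U(\tau_0)$ and $\varphi(\tau_0)$ is simply not a function of $U(\tau_0)$. To finish one must control the limiting behaviour of the eigenprojections of $U$ across such resonances and show they glue to a $C^{1}$ matrix; monotonicity of the $\lambda_j$ is no help, because both can be strictly increasing while their difference passes through $m\pi$. There is also a mild circularity: your derivation of $\dot\lambda$ already assumes a $C^{1}$ eigenvector $v(\tau)$, which is only clear at simple eigenvalues of $U$, so the monotonicity you invoke is itself established only away from the very points where you need it.
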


Notice that $\varphi $ is not unique and that
\begin{equation}
\frac{d}{d\tau }Q_{2}^{-1}Q_{1}=Q_{2}^{-1}CQ_{2}^{\ast -1}\text{,}  \label{o}
\end{equation}
whenever $\det Q_{2}\left( \tau \right) \neq 0$ (see \cite{rezende1}).

\begin{example}
Consider $n=1$, $B=0$, $A=C=1$. Let $k_1,k_2\in \mathbb{R}$. For $k_2>0$, let 
\begin{equation*}
Q_2( \tau ) =k_2^{-1/2}\cos \tau \text{, \ }Q_1( \tau
) =k_2^{-1/2}( k_1\cos \tau +k_2\sin \tau ) \text{.}
\end{equation*}
Then there exists an increasing continuous function of $\tau $,$\ \xi (
k_1,k_2,\tau ) \linebreak[0]\equiv \xi ( \tau ) $, $\tau \in \mathbb{R}$,
such that 
\begin{equation*}
Q_2( \tau ) =r( \tau ) \cos \xi ( \tau ) 
\text{, \ }Q_1( \tau ) =r( \tau ) \sin \xi (
\tau ) \text{,}
\end{equation*}
where $r( \tau ) =
k_2^{-1/2}\sqrt{\cos ^2\tau +( k_1\cos
\tau +k_2\sin \tau) ^2}$. The function $\xi $ is not unique in the
sense that two such functions differ by $2k\pi $, $k\in \mathbb{Z}$. For $\tau
\neq \frac \pi 2+k\pi $, one has 
\begin{equation}
k_1+k_2\tan \tau =\tan \xi ( \tau ) .  \label{l}
\end{equation}

This formula shows that $\lim_{\tau \rightarrow \pm \infty }\xi ( \tau
) =\pm \infty $.

For $k_2<0$, one defines, obviously, 
\begin{equation*}
\xi ( k_1,k_2,\tau ) =-\xi ( -k_1,-k_2,\tau ) \text{.}
\end{equation*}

When $k_2=0$, $\xi $ is a constant function. For every $k_2\in \mathbb{R}$,
formula \eqref{l} remains valid.

One can fix $\xi $ by imposing $-\frac \pi 2<\xi ( 0) <\frac \pi
2 $, as we shall do from now on.

For $k_2>0$, one has $\xi \left( \frac \pi 2+k\pi \right) =\frac \pi 2+k\pi $%
, and for $k_2<0$, one has $\xi \left( \frac \pi 2+k\pi \right) =-\frac \pi
2-k\pi $, for every $k\in \mathbb{Z}$.

If $S$ is a symmetric $n\times n$\ matrix, and $\Omega $ is an orthogonal
matrix that diagonalizes $S$, $S=\Omega \diag( s_1,s_2,\ldots
,s_n) \Omega ^{*}$, we denote 
\begin{equation*}
\xi ( k_1,k_2,S) \equiv \xi ( S) =\Omega \diag%
( \xi ( s_1) ,\xi ( s_2) ,\ldots ,\xi (
s_n) ) \Omega ^{*}\text{.}
\end{equation*}

Define now 
\begin{equation}
\zeta ( \tau ) \equiv \zeta ( k_1,k_2,\tau ) =-\xi
( k_1,k_2,\tau ) +\frac \pi 2\text{.}  \label{m}
\end{equation}
Then $0<\zeta ( 0) <\pi $, and 
\begin{equation*}
( k_1+k_2\tan \tau ) ^{-1}=\tan \zeta ( \tau ) ,
\end{equation*}
for every $\tau $ such that $k_1+k_2\tan \tau \neq 0$.

For $k_2>0$, one has $\zeta \left( \frac \pi 2+k\pi \right) =-k\pi $, and
for $k_2<0$, one has $\zeta \left( \frac \pi 2+k\pi \right) =
(k+1) \pi $, for every $k\in \mathbb{Z}$. The function $\zeta $ is
increasing for $k_2<0$, decreasing for $k_2>0$ and constant for $k_2=0$.

If $S$ is a symmetric $n\times n$\ matrix, one can define $\zeta (
k_1,k_2,S) $ as we did before for $\xi $.

We shall need these functions later.
\end{example}

Theorem \ref{T1}\ can be extended in the following way:

\begin{theorem}
\label{T3}Assume that $C\left( \tau \right) $ is of class $C^1$. Consider
two isotropic solutions of \eqref{J1}, $\left( Q_1,P_1\right) $ and $\left(
Q_2,P_2\right) $, such that $W=I_n$. Then, there are $C^1$ matrix-valued
functions $r\left( \tau \right) $, $\varphi \left( \tau \right) $, for $\tau
\in \left[ 0,t\right] $, such that: a) $\det r\left( \tau \right) \neq 0$
and $\varphi \left( \tau \right) $ is symmetric for every $\tau $; b) the
eigenvalues of $\varphi $ are $C^1$ functions of $\tau $; c) one has 
\begin{equation*}
Q_2\left( \tau \right) =r\left( \tau \right) \cos \varphi \left( \tau
\right) \text{ \ and \ }Q_1\left( \tau \right) =r\left( \tau \right) \sin
\varphi \left( \tau \right) \text{.}
\end{equation*}
\end{theorem}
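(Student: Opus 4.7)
The plan is to follow the construction of Theorem~\ref{T1} from \cite{rezende1}, noting that the sign hypothesis on $C$ is used there only to obtain statement~(b) of Theorem~\ref{T1}, namely the \emph{strict} monotonicity of the eigenvalues of $\varphi$. The existence of the polar representation and the $C^{1}$-regularity of $r$ and $\varphi$ rely only on the $C^{1}$-regularity of $C$ and the symplectic structure. Since Theorem~\ref{T3} no longer requires monotonicity, the same construction, carried out on the compact interval $[0,t]$, should suffice.

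The matrix $r(\tau)$ can be produced independently of the sign of $C$. Indeed, the $n\times n$ matrix
\begin{equation*}
R(\tau)^{2}:=Q_{1}(\tau)Q_{1}^{\ast}(\tau)+Q_{2}(\tau)Q_{2}^{\ast}(\tau)
\end{equation*}
is positive definite at every $\tau$: a vector $v$ with $Q_{1}^{\ast}v=Q_{2}^{\ast}v=0$ would satisfy $v=(P_{1}Q_{2}^{\ast}-P_{2}Q_{1}^{\ast})v=0$ by the Wronskian identity. Hence its positive square root $R(\tau)$ is $C^{1}$ and invertible, and the recipe of \cite{rezende1} --- which extracts $r$ from $R$ using the symmetry $Q_{1}Q_{2}^{\ast}=Q_{2}Q_{1}^{\ast}$ --- yields an invertible $C^{1}$ matrix $r(\tau)$ regardless of the sign of $C$.

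For $\varphi(\tau)$, I would proceed locally. On any open subinterval of $[0,t]$ where $\det Q_{2}\neq 0$, identity~\eqref{o} is a well-posed $C^{1}$ ODE for the symmetric matrix $Y=Q_{2}^{-1}Q_{1}=\tan\varphi$; the sign of $C$ controls only whether $Y$ is monotone, not its existence or $C^{1}$-regularity. At isolated times $\tau_{\ast}$ where $\det Q_{2}(\tau_{\ast})=0$ --- i.e.\ where eigenvalues of $\varphi$ cross $\pi/2+k\pi$ --- I switch to the dual chart with $Y'=Q_{1}^{-1}Q_{2}$ (which is regular precisely when $Q_{2}$ is singular), the matrix analogue of the function $\zeta$ of the Example, with $\varphi$ and $\zeta$ related spectrally by $\varphi=\pi/2-\zeta$. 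Should $Q_{1}$ happen to be singular simultaneously with $Q_{2}$, a preliminary constant symplectic rotation of $(Q_{1},Q_{2})$ restores invertibility of one of them at $\tau_{\ast}$. Compactness of $[0,t]$ limits these transitions to finitely many, and the local descriptions glue into a single $C^{1}$ symmetric matrix $\varphi(\tau)$.

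The main technical obstacle, which replaces the monotonicity argument of \cite{rezende1}, is verifying the $C^{1}$-regularity of the individual eigenvalues of $\varphi$ at points of spectral coalescence: in general, a $C^{1}$ family of symmetric matrices has only continuous eigenvalues, so one must exploit the specific ODE structure coming from \eqref{J1} --- in particular the smoothness of $\varphi$ itself inherited from $r^{-1}Q_{1},\,r^{-1}Q_{2}$ --- and choose a consistent continuous labeling of the eigenvalues across any degeneracies on $[0,t]$.
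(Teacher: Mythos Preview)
Your sketch has two genuine gaps. First, without any sign hypothesis on $C$ there is no reason the zero set of $\det Q_{2}$ should consist of isolated points, so the phrase ``at isolated times $\tau_{\ast}$'' and the subsequent claim that compactness forces finitely many chart transitions are unjustified; $\det Q_{2}$ could in principle vanish on a set with accumulation points, and your two-chart/rotation scheme would then require infinitely many patches with no evident way to glue them into a single $C^{1}$ matrix $\varphi$. Second, you yourself flag but do not resolve the $C^{1}$-regularity of the eigenvalues of $\varphi$ across coalescences; invoking ``the specific ODE structure'' is not a proof, and this is exactly the part of \cite{rezende1} that used $C>0$.

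The paper avoids both difficulties with a one-line reduction to Theorem~\ref{T1}. Multiply $\Phi$ on the right by the rotation $\Psi(\tau)=\begin{bmatrix}\cos(k\tau)I_{n}&\sin(k\tau)I_{n}\\-\sin(k\tau)I_{n}&\cos(k\tau)I_{n}\end{bmatrix}$; the resulting symplectic curve $\Phi_{1}=\Phi\Psi$ solves a new Hamiltonian system with $C_{1}=C+k\bigl(Q_{2}Q_{2}^{\ast}+Q_{1}Q_{1}^{\ast}\bigr)$. Your own positivity observation for $Q_{1}Q_{1}^{\ast}+Q_{2}Q_{2}^{\ast}$, together with compactness of $[0,t]$, makes $C_{1}>0$ for $k$ large. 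Theorem~\ref{T1} then furnishes $r_{1},\varphi_{1}$ with $C^{1}$, strictly increasing eigenvalues, and one simply sets $r=r_{1}$, $\varphi=\varphi_{1}-k\tau I_{n}$: the eigenvalues of $\varphi$ are those of $\varphi_{1}$ shifted by $-k\tau$, hence $C^{1}$, and the trigonometric addition formulas give $Q_{2}=r\cos\varphi$, $Q_{1}=r\sin\varphi$. No charts, no gluing, no eigenvalue-crossing analysis.
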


\begin{proof}
Let us first notice that $Q_2Q_2^{*}+Q_1Q_1^{*}>0$. This is proved noticing
that, as $P_1Q_2^{*}-P_2Q_1^{*}=I_n$, one has $\left(
P_1^{*}x,Q_2^{*}x\right) -\left( P_2^{*}x,Q_1^{*}x\right) =\left| x\right|
^2 $, which implies that $\ker Q_1^{*}\cap \ker Q_2^{*}=\left\{ 0\right\} $.
Hence, $\left( Q_2^{*}x,Q_2^{*}x\right) +\left( Q_1^{*}x,Q_1^{*}x\right) 
>0$, for every $x\neq 0$.

Define now 
\begin{equation*}
\Phi =
\begin{bmatrix}
Q_2 & Q_1 \\ 
P_2 & P_1
\end{bmatrix}
\text{, \ }\Psi =
\begin{bmatrix}
\cos \left( k\tau \right) I_n & \sin \left( k\tau \right) I_n \\ 
-\sin \left( k\tau \right) I_n & \cos \left( k\tau \right) I_n
\end{bmatrix}
\text{,}
\end{equation*}
$M$ as before, $\Phi _1=\Phi \Psi $ and $M_1=\dot{\Phi}_1\Phi _1^{-1}$. The
constant $k$ is $>0$. Then, one has 
\begin{equation*}
M_1=M+\Phi \dot{\Psi}\Psi ^{-1}\Phi ^{-1}\text{.}
\end{equation*}

Let the $n\times n$ matrices, that are associated with $M_1$, be $A_1$, $B_1$
and $C_1$. Then 
\begin{equation*}
C_1=C+k\left( Q_2Q_2^{*}+Q_1Q_1^{*}\right) \text{.}
\end{equation*}

Hence, as $Q_2Q_2^{*}+Q_1Q_1^{*}>0$, for $k$ large enough, we have that 
$C_1\left( \tau \right) >0$, for every $\tau \in \left[ 0,t\right] $. We can
then apply Theorem \ref{T1}. There are $C^1$ matrix-valued functions $%
r_1\left( \tau \right) $, $\varphi _1\left( \tau \right) $, for $\tau \in
\left[ 0,t\right] $, such that 
\begin{align*}
\cos \left( k\tau \right) Q_2\left( \tau \right) -\sin \left( k\tau \right)
Q_1\left( \tau \right) &=r_1\left( \tau \right) \cos \varphi _1\left( \tau
\right) \\
\sin \left( k\tau \right) Q_2\left( \tau \right) +\cos \left( k\tau \right)
Q_1\left( \tau \right) &=r_1\left( \tau \right) \sin \varphi _1\left( \tau
\right) \text{.}
\end{align*}
From this, we have 
\begin{align*}
Q_2\left( \tau \right) &= r_1\left( \tau \right) \cos \left( \varphi
_1\left( \tau \right) -k\tau I_n\right) \\
Q_1\left( \tau \right) &= r_1\left( \tau \right) \sin \left( \varphi
_1\left( \tau \right) -k\tau I_n\right) \text{.}
\end{align*}
\end{proof}

The generic differential equations for $r$ and $\varphi $ are easily derived
from equations (15), (17) and (18) in \cite{rezende1}.

Consider $\left( r_0,s\right) $, with $s$ symmetric, such that 
\begin{equation*}
\dot{r}_0=Br_0+Cr_0^{*-1}s\text{, \ }\dot{s}%
=sr_0^{-1}Cr_0^{*-1}s+r_0^{-1}Cr_0^{*-1}-r_0^{*}Ar_0\text{.}
\end{equation*}
Then $r$ is of the form $r=r_0\Omega $, where $\Omega $ is any orthogonal, 
$\Omega ^{-1}=\Omega ^{*}$, and time-dependent $C^1$ matrix. From this one
can derive a differential equation for $rr^{*}$.

The function $\varphi $ verifies the equations 
\begin{equation}
\frac{\cos \mathcal{C}_{\varphi }-I}{\mathcal{C}_{\varphi }}\dot{\varphi}%
=-\Omega ^{\ast }\dot{\Omega}\text{,\quad}\frac{\sin \mathcal{C}_{\varphi }%
}{\mathcal{C}_{\varphi }}\dot{\varphi}=r^{-1}Cr^{\ast -1}\text{,}  \label{k}
\end{equation}
where $\mathcal{C}_{\varphi }\dot{\varphi}=\left[ \varphi ,\dot{\varphi}%
\right] =\varphi \dot{\varphi}-\dot{\varphi}\varphi $, $\left( \mathcal{C}%
_{\varphi }\right) ^{2}\dot{\varphi}\equiv \mathcal{C}_{\varphi }^{2}\dot{%
\varphi}=\left[ \varphi ,\left[ \varphi ,\dot{\varphi}\right] \right] $, and
so on.

As in Theorem \ref{T1}, $\varphi $ is not unique. Notice that $r\left( \tau
\right) =r_{1}\left( \tau \right) $ and $\varphi \left( \tau \right)
=\varphi _{1}\left( \tau \right) -k\tau I_{n}$, with $k$ large enough and $%
\varphi _{1}$ such that its eigenvalues are $C^{1}$ functions of $\tau $,
with strictly positive derivatives.

\begin{remark}
If one considers $\Phi ^{*}$ instead of $\Phi $, then $Q_2$ is replaced by $%
Q_2^{*}$ and $Q_1$ is replaced by $P_1^{*}$. Then Theorem \ref{T3} gives
\begin{equation*}
Q_2^{*}\left( \tau \right) =r\left( \tau \right) \cos \varphi \left( \tau
\right) \text{ \ and \ }P_2^{*}\left( \tau \right) =r\left( \tau \right)
\sin \varphi \left( \tau \right) \text{,}
\end{equation*}
or 
\begin{equation*}
Q_2\left( \tau \right) =\cos \varphi \left( \tau \right) r^{*}\left( \tau
\right) \text{ \ and \ }P_2\left( \tau \right) =\sin \varphi \left( \tau
\right) r^{*}\left( \tau \right) \text{.}
\end{equation*}
In this case the matrix $\varphi \left( \tau \right) $ is a generalization
of the so-called Pr\"{u}fer angle \cite{CL}.
\end{remark}

Denote $\left( Q_c,P_c\right) $, $\left( Q_s,P_s\right) $ the (isotropic)
solutions of \eqref{J1} such that 
\begin{equation*}
Q_c\left( 0\right) =P_s\left( 0\right) =I_n\text{,\quad}Q_s\left( 0\right)
=P_c\left( 0\right) =0\text{.}
\end{equation*}
From now on we shall denote by $\Phi _0$ the symplectic matrix 
\begin{equation*}
\Phi _0=
\begin{bmatrix}
Q_c & Q_s \\ 
P_c & P_s
\end{bmatrix}
\text{.}
\end{equation*}
Then $\dot{\Phi}_0=M\Phi _0$ and $\Phi _0\left( 0\right) =I_{2n}$.

\subsection{The Sturm-Liouville problem}
\ \\
\noindent 
Let $t\in \left[ 0,T\right[ $ and $\lambda \in \left] l_{-1},l_{1}\right[
\subset \mathbb{R}$. The interval $\left] l_{-1},l_{1}\right[ $ can be as
general as possible. In this article, $t$ is the ''time'' variable and $%
\lambda $ is the ''eigenvalue'' variable.

Consider $A_0$, $B_0$ and $C_0$ time and eigenvalue dependent $n\times n$
matrices. As in \eqref{J1} $A_0$ and $C_0$ are symmetric. Define also $M_0$, 
$S_0$ and $\Phi _0$ (here, $\dot{\Phi}_0=M_0\Phi _0$) as before.

From now on we shall use the notations $A_0\equiv A_0\left( \tau \right)
\equiv A_0\left( \tau ,\lambda \right) $, and the same for the other
matrices.

Consider also $\alpha _{j}$, $\beta _{j}$, $\gamma _{j}$ and $\delta _{j}$, $%
j=0,1$, eight eigenvalue dependent $n\times n$ matrices, and the problem of
finding a $\lambda $ and a solution 
\begin{equation*}
\tau \longmapsto \left( q\left( \tau ,\lambda \right) ,p\left( \tau ,\lambda
\right) \right) \equiv \left( q\left( \tau \right) ,p\left( \tau \right)
\right) \equiv \left( q,p\right) \text{,}
\end{equation*}
$\left( q,p\right) \in \mathbb{R}^{n}\times \mathbb{R}^{n}$, for $\tau \in \left[
0,t\right] ,\lambda \in \left] l_{-1},l_{1}\right[ $, of the system 
\begin{equation*}
\dot{q}=B_{0}q+C_{0}p\text{,\quad}\dot{p}=-A_{0}q-B_{0}^{\ast }p\text{,}
\end{equation*}
with the ''boundary'' conditions 
\begin{equation*}
\begin{bmatrix}
\beta _{0} & \delta _{0} \\ 
\beta _{1} & \delta _{1}
\end{bmatrix}
\begin{bmatrix}
-q\left( 0\right) \\ 
q\left( t\right)
\end{bmatrix}
+
\begin{bmatrix}
-\alpha _{0} & \gamma _{0} \\ 
-\alpha _{1} & \gamma _{1}
\end{bmatrix}
\begin{bmatrix}
p\left( 0\right) \\ 
p\left( t\right)
\end{bmatrix}
=0\text{,}
\end{equation*}
or, equivalently, 
\begin{equation*}
\begin{bmatrix}
\beta _{0} & \alpha _{0} \\ 
\beta _{1} & \alpha _{1}
\end{bmatrix}
\begin{bmatrix}
q\left( 0\right) \\ 
p\left( 0\right)
\end{bmatrix}
-
\begin{bmatrix}
\delta _{0} & \gamma _{0} \\ 
\delta _{1} & \gamma _{1}
\end{bmatrix}
\begin{bmatrix}
q\left( t\right) \\ 
p\left( t\right)
\end{bmatrix}
=0\text{.}
\end{equation*}

Denote 
\begin{equation*}
\mathcal{S}_{q}=
\begin{bmatrix}
\beta _{0} & \delta _{0} \\ 
\beta _{1} & \delta _{1}
\end{bmatrix}
\text{,\quad}\mathcal{S}_{p}=
\begin{bmatrix}
-\alpha _{0} & \gamma _{0} \\ 
-\alpha _{1} & \gamma _{1}
\end{bmatrix}
\text{.}
\end{equation*}

In order to preserve the self-adjointness of the problem, one has to have
self-adjoint boundary conditions $\mathcal{S}_{q}\mathcal{S}_{p}^{\ast }=%
\mathcal{S}_{p}\mathcal{S}_{q}^{\ast }$ \cite{kratz}. This means that 
\begin{align*}
\alpha _{0}\beta _{0}^{\ast }+\delta _{0}\gamma _{0}^{\ast } &=\beta
_{0}\alpha _{0}^{\ast }+\gamma _{0}\delta _{0}^{\ast }, \\
\alpha _{1}\beta _{1}^{\ast }+\delta _{1}\gamma _{1}^{\ast } &=\beta
_{1}\alpha _{1}^{\ast }+\gamma _{1}\delta _{1}^{\ast }, \\
\alpha _{0}\beta _{1}^{\ast }+\delta _{0}\gamma _{1}^{\ast } &=\beta
_{0}\alpha _{1}^{\ast }+\gamma _{0}\delta _{1}^{\ast }.
\end{align*}

\begin{remark}
Consider $F$ a eigenvalue dependent symplectic matrix. If $\Phi $ is a
symplectic solution of $\dot{\Phi}=M_0\Phi $, then all previous formulas
involving $\Phi $, $M_0$, $\mathcal{S}_q$ and $\mathcal{S}_p$ remain valid
if we replace $\Phi $ by $F^{-1}\Phi $, $M_0$ by $F^{-1}M_0F$, $\mathcal{S}%
_q $ by $\mathcal{S}_q\diag( F_{11},F_{11}) +\mathcal{S}_p%
\diag( -F_{21},F_{21}) $, and $\mathcal{S}_p$ by $\mathcal{S%
}_q\diag( -F_{12},F_{12}) +\mathcal{S}_p\diag(
F_{22},F_{22}) $.
\end{remark}

As 
\begin{equation*}
\begin{bmatrix}
q\left( \tau \right) \\ 
p\left( \tau \right)
\end{bmatrix}
=\Phi _0\left( \tau \right)
\begin{bmatrix}
q\left( 0\right) \\ 
p\left( 0\right)
\end{bmatrix}
\end{equation*}
one obtains 
\begin{equation*}
\left(
\begin{bmatrix}
\beta _0 & \alpha _0 \\ 
\beta _1 & \alpha _1
\end{bmatrix}
-
\begin{bmatrix}
\delta _0 & \gamma _0 \\ 
\delta _1 & \gamma _1
\end{bmatrix}
\Phi _0\left( t\right) \right)
\begin{bmatrix}
q\left( 0\right) \\ 
p\left( 0\right)
\end{bmatrix}
=0.
\end{equation*}

In order to have a non trivial solution, $\left( q\left( 0\right) ,p\left(
0\right) \right) \neq \left( 0,0\right) $, of this system we must have 
\begin{equation}
\det \left(
\begin{bmatrix}
\beta _0 & \alpha _0 \\ 
\beta _1 & \alpha _1
\end{bmatrix}
-
\begin{bmatrix}
\delta _0 & \gamma _0 \\ 
\delta _1 & \gamma _1
\end{bmatrix}
\Phi _0\left( t\right) \right) =0.  \label{a}
\end{equation}

We shall need now the following lemma.

\begin{lemma}
Consider $a$, $b$, $c$ and $d$, $n\times n$ real matrices, such that $%
ab^{*}=ba^{*}$ and $cd^{*}=dc^{*}$. Let 
\begin{equation*}
N=
\begin{bmatrix}
a & b \\ 
c & d
\end{bmatrix}
\text{.}
\end{equation*}
Then $\det N=0$ if and only if $\det \left( ad^{*}-bc^{*}\right) =0$.
\end{lemma}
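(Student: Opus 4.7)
The plan is to extract the quantity $ad^{*}-bc^{*}$ from a symplectic-type product, namely $NJN^{*}$, where
\[
J=\begin{bmatrix}0 & -I_{n}\\ I_{n} & 0\end{bmatrix}.
\]
First I would compute
\[
NJN^{*}=\begin{bmatrix}a & b\\ c & d\end{bmatrix}\begin{bmatrix}0 & -I_{n}\\ I_{n} & 0\end{bmatrix}\begin{bmatrix}a^{*} & c^{*}\\ b^{*} & d^{*}\end{bmatrix}=\begin{bmatrix}ba^{*}-ab^{*} & bc^{*}-ad^{*}\\ da^{*}-cb^{*} & dc^{*}-cd^{*}\end{bmatrix}.
\]
The hypotheses $ab^{*}=ba^{*}$ and $cd^{*}=dc^{*}$ kill both diagonal blocks, so that
\[
NJN^{*}=\begin{bmatrix}0 & -(ad^{*}-bc^{*})\\ (ad^{*}-bc^{*})^{*} & 0\end{bmatrix}.
\]

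Next I would take determinants on both sides. On the left, $\det(NJN^{*})=(\det N)^{2}\det J=(\det N)^{2}$, since $\det J=1$. On the right, permuting the first $n$ rows with the last $n$ rows (which contributes a factor $(-1)^{n}$) reduces the matrix to block-diagonal form, giving
\[
\det(NJN^{*})=(-1)^{n}\det\bigl(-(ad^{*}-bc^{*})\bigr)\,\det\bigl((ad^{*}-bc^{*})^{*}\bigr)=\det(ad^{*}-bc^{*})^{2},
\]
where the two factors of $(-1)^{n}$ (one from the row swap, one from pulling the sign out of the upper-right block) cancel, and $\det M^{*}=\det M$.

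Thus $(\det N)^{2}=\bigl(\det(ad^{*}-bc^{*})\bigr)^{2}$, so $\det N=\pm\det(ad^{*}-bc^{*})$, and in particular one vanishes iff the other does. There is no real obstacle here; the only thing to be careful about is the bookkeeping of the signs $(-1)^{n}$ coming from the block row swap and from factoring the minus sign out of an $n\times n$ block, which fortunately cancel so that the identity holds in any dimension.
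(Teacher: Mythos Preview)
Your proof is correct and is essentially the paper's argument: the paper computes $NJN^{*}J=\diag\bigl(-ad^{*}+bc^{*},\,-da^{*}+cb^{*}\bigr)$ and takes determinants to get $(\det N)^{2}=\bigl(\det(ad^{*}-bc^{*})\bigr)^{2}$. The only cosmetic difference is that the paper multiplies by one more factor of $J$ on the right, which converts your anti-block-diagonal $NJN^{*}$ into a block-diagonal matrix and thereby absorbs the sign bookkeeping you carried out by hand.
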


\begin{proof}
From 
\begin{equation*}
NJN^{*}J=\diag\left( -ad^{*}+bc^{*},-da^{*}+cb^{*}\right) \text{,}
\end{equation*}
one has $\left( \det N\right) ^2=\left( \det \left( ad^{*}-bc^{*}\right)
\right) ^2$. The lemma follows now easily.
\end{proof}

In order to apply this lemma to equation \eqref{a} we need to assume that,
from now on, 
\begin{equation}
\beta _j\alpha _j^{*}+\delta _j\gamma _j^{*}-\beta _jQ_s^{*}\left( t\right)
\delta _j^{*}-\beta _jP_s^{*}\left( t\right) \gamma _j^{*}-\delta
_jQ_c\left( t\right) \alpha _j^{*}-\gamma _jP_c\left( t\right) \alpha _j^{*}%
\text{,}  \label{b}
\end{equation}
for $j=0,1$, is symmetric.

Condition \eqref{b} is equivalent to 
\begin{equation*}
\begin{bmatrix}
\delta _{j} & \gamma _{j}
\end{bmatrix}
\Phi _{0} 
\begin{bmatrix}
-\alpha _{j} & \beta _{j}
\end{bmatrix}^{\ast }
+\beta _{j}\alpha _{j}^{\ast }+\delta _{j}\gamma _{j}^{\ast }\text{,}
\end{equation*}
for $j=0,1$, is symmetric. This is true for every symplectic matrix $\Phi
_{0}$ if and only if it is true for every matrix $\Phi _{0}$, even if it is
not symplectic. Then one can easily prove the following proposition.

\begin{proposition}
\begin{equation*}
\begin{bmatrix}
\delta _j & \gamma _j
\end{bmatrix}
\Phi _0
\begin{bmatrix}
-\alpha _j & \beta _j
\end{bmatrix}^{*}
+\beta _j\alpha _j^{*}+\delta _j\gamma _j^{*}\text{,}
\end{equation*}
for $j=0,1$, is symmetric for every symplectic matrix $\Phi _0$, if and only
if $\beta _j\alpha _j^{*}+\delta _j\gamma _j^{*}$ is symmetric and $\beta
_jG\delta _j^{*}=0$, $\beta _jG\gamma _j^{*}=0$, $\delta _jG\alpha _j^{*}=0$%
, $\gamma _jG\alpha _j^{*}=0$, for $j=0,1$, and every antisymmetric matrix 
$G$.
\end{proposition}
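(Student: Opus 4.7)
The plan is to combine the preceding Remark, which reduces ``symmetric for every symplectic $\Phi_0$'' to ``symmetric for every $\Phi_0 \in M_{2n}(\mathbb{R})$,'' with a direct block expansion. After this reduction, no group-theoretic considerations remain, and the whole argument becomes bookkeeping on four arbitrary $n\times n$ blocks of $\Phi_0$.

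Writing $\Phi_0 = \begin{bmatrix}X & Y \\ Z & W\end{bmatrix}$ and carrying out the matrix products explicitly, the expression in question expands to
$$E = \beta_j\alpha_j^{*} + \delta_j\gamma_j^{*} - \delta_j X \alpha_j^{*} + \delta_j Y \beta_j^{*} - \gamma_j Z \alpha_j^{*} + \gamma_j W \beta_j^{*}.$$
Setting all four blocks to zero yields that $\beta_j\alpha_j^{*} + \delta_j\gamma_j^{*}$ must be symmetric, giving the first stated condition. The remaining four terms are bilinear, each in one of the independent blocks $X,Y,Z,W$, so by varying these blocks independently, $E$ is symmetric for every $\Phi_0$ if and only if each of the four bilinear forms $\delta_j X\alpha_j^{*}$, $\delta_j Y\beta_j^{*}$, $\gamma_j Z\alpha_j^{*}$, $\gamma_j W\beta_j^{*}$ is a symmetric matrix for every value of its argument.

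For the final step, I would argue that each such bilinear symmetry is equivalent to the corresponding antisymmetric-$G$ identity in the Proposition. Decomposing the argument matrix into its symmetric and antisymmetric parts and testing on an antisymmetric $G$, the identity $(\delta_j G\alpha_j^{*})^{*} = \alpha_j G^{*}\delta_j^{*} = -\alpha_j G\delta_j^{*}$ converts the symmetry requirement into $\delta_j G\alpha_j^{*} = 0$, and likewise into $\gamma_j G\alpha_j^{*} = 0$, $\delta_j G\beta_j^{*} = 0$ (equivalent by transposition to $\beta_j G\delta_j^{*} = 0$) and $\gamma_j G\beta_j^{*} = 0$ (equivalent to $\beta_j G\gamma_j^{*} = 0$). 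These are precisely the four conditions in the Proposition, and the reverse direction is immediate by substitution. The main obstacle is the careful justification of this last equivalence: one must verify that the four antisymmetric-$G$ identities, combined with symmetry of the constant part, truly exhaust all constraints imposed by symmetry of $E$, so that no hidden ``symmetric-part'' restriction is overlooked in the decomposition $X = \tfrac12(X+X^{*}) + \tfrac12(X-X^{*})$.
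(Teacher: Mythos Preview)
Your reduction to arbitrary $\Phi_0$ and the block expansion are exactly what the paper intends (it offers no proof beyond ``one can easily prove''), and separating $E$ into the constant piece plus four independent bilinear terms, one per block of $\Phi_0$, is correct.

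The gap is in the final step. From $(\delta_j G\alpha_j^{*})^{*}=-\alpha_j G\delta_j^{*}$ the symmetry requirement on $\delta_j G\alpha_j^{*}$ reads $\delta_j G\alpha_j^{*}=-\alpha_j G\delta_j^{*}$, not $\delta_j G\alpha_j^{*}=0$; your sentence tacitly replaces $-\alpha_j G\delta_j^{*}$ by $-\delta_j G\alpha_j^{*}$. In fact the conditions ``$\delta_j X\alpha_j^{*}$ is symmetric for every $X$'' and ``$\delta_j G\alpha_j^{*}=0$ for every antisymmetric $G$'' are inequivalent in both directions already for $n=2$ (take $\beta_j=\gamma_j=0$ throughout and write $E_{kl}$ for the matrix units):
\begin{itemize}
\item $\delta_j=E_{11}$, $\alpha_j=E_{12}$ gives $\delta_j X\alpha_j^{*}=X_{12}\,E_{11}$, symmetric for every $X$; yet $\delta_j(E_{12}-E_{21})\alpha_j^{*}=E_{11}\neq 0$.
\item $\delta_j=E_{11}$, $\alpha_j=E_{21}$ gives $\delta_j G\alpha_j^{*}=G_{11}\,E_{12}=0$ for every antisymmetric $G$; yet $\delta_j I_n\alpha_j^{*}=E_{12}$ is not symmetric, and $\Phi_0=I_{2n}$ is symplectic.
\end{itemize}
So the obstacle you flag is real and cannot be closed: testing on the symmetric part of $X$ produces an independent family of constraints, and testing on the antisymmetric part does not by itself force vanishing. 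The equivalence asserted in the Proposition, read literally, is therefore not obtainable by this (or any) argument; what your decomposition actually delivers is that each of $\delta_j X\alpha_j^{*}$, $\delta_j X\beta_j^{*}$, $\gamma_j X\alpha_j^{*}$, $\gamma_j X\beta_j^{*}$ must be symmetric for every $n\times n$ matrix $X$.
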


With this assumption, equation \eqref{a} is equivalent to 
\begin{equation}
\det \left( ad^{\ast }-bc^{\ast }\right) =0\text{,}  \label{t}
\end{equation}
where 
\begin{align*}
a &=\beta _{0}-\delta _{0}Q_{c}\left( t\right) -\gamma _{0}P_{c}\left(
t\right) \\
d &=\alpha _{1}-\delta _{1}Q_{s}\left( t\right) -\gamma _{1}P_{s}\left(
t\right) \\
b &=\alpha _{0}-\delta _{0}Q_{s}\left( t\right) -\gamma _{0}P_{s}\left(
t\right) \\
c &=\beta _{1}-\delta _{1}Q_{c}\left( t\right) -\gamma _{1}P_{c}\left(
t\right).
\end{align*}

It is then natural to consider a symplectic matrix $\Phi$ defined by 
\begin{equation*}
\Phi =
\begin{bmatrix}
Q_{2} & Q_{1} \\ 
P_{2} & P_{1}
\end{bmatrix}
\text{,}
\end{equation*}
where $Q_{2}=R_{0}\left( ad^{\ast }-bc^{\ast }\right) R_{1}^{\ast }$, with $%
\det R_{0}\neq 0$, $\det R_{1}\neq 0$.

Then, formula \eqref{t} is equivalent to $\det Q_{2}=0$.

Notice that, if $\Phi $ is of the form
\begin{equation}
\Phi =L_{0}+L_{1}\Phi _{0}L_{2}+L_{3}\Phi _{0}^{\ast }L_{4}\text{,}
\label{u}
\end{equation}
then 
\begin{align*}
\left( L_0\right) _{11} &= R_0\left( \beta _0\alpha _1^{*}-\alpha _0\beta
_1^{*}+\delta _0\gamma _1^{*}-\gamma _0\delta _1^{*}\right) R_1^{*}\text{,}
\\
\left( L_1\right) _{11} &= R_0\delta _0\text{, \ }\left( L_1\right)
_{12}=R_0\gamma _0\text{,} \\
\left( L_2\right) _{11} &= -\alpha _1^{*}R_1^{*}\text{, \ }\left( L_2\right)
_{21}=\beta _1^{*}R_1^{*}\text{,} \\
\left( L_3\right) _{11} &= R_0\alpha _0\text{, \ }\left( L_3\right)
_{12}=-R_0\beta _0\text{,} \\
\left( L_4\right) _{11} &= \delta _1^{*}R_1^{*}\text{, \ }\left( L_4\right)
_{21}=\gamma _1^{*}R_1^{*}\text{.}
\end{align*}
As $\alpha _{0}\beta _{1}^{\ast }+\delta _{0}\gamma _{1}^{\ast }=\beta
_{0}\alpha _{1}^{\ast }+\gamma _{0}\delta _{1}^{\ast }$, one obtains 
\begin{equation*}
\left( L_{0}\right) _{11}=2R_{0}\left( \beta _{0}\alpha _{1}^{\ast }-\alpha
_{0}\beta _{1}^{\ast }\right) R_{1}^{\ast }=2R_{0}\left( \delta _{0}\gamma
_{1}^{\ast }-\gamma _{0}\delta _{1}^{\ast }\right) R_{1}^{\ast }\text{.}
\end{equation*}

The main problem here involved is to discover conditions over the matrices $%
L_{0}$, $L_{1}$, $L_{2}$, $L_{3}$ and $L_{4}$, so that $\Phi $ is symplectic
for every symplectic matrix $\Phi _{0}$. More generally, the problem is to
discover conditions over $\Phi $, with $Q_{2}=R_{0}\left( ad^{\ast
}-bc^{\ast }\right) R_{1}^{\ast }$, such that $\Phi $ is symplectic for
every symplectic matrix $\Phi _{0}$. These questions can be completely
solved in dimension one as it is done in the Appendix.

Let us take a look to simple cases in dimension greater than one.

Assume that $L_{0}=L_{3}=L_{4}=0$ and that $L_{1}$ and $L_{2}$ are both
symplectic or antisymplectic. Then $\Phi $ is symplectic for every
symplectic matrix $\Phi _{0}$. The same happens, \textit{mutatis mutandis},
when $L_{0}=L_{1}=L_{2}=0$.

The purpose of this article is to use the polar representation theorem in
order to obtain results on the Sturm-Liouville problem.

\section{A theorem on two parameters dependent\\
symplectic matrices}

\noindent 
In this section we prove a theorem that we shall need later and is a good
introduction to the method we use in this article.

As before, let $\tau \in \left[ 0,t\right] \subset \left[ 0,T\right[ $ and $%
\lambda \in \left] l_{-1},l_{1}\right[ \subset \mathbb{R}$. Consider the $C^{1}$
function $\left( \tau ,\lambda \right) \mapsto \Phi \left( \tau ,\lambda
\right) $, where $\Phi \left( \tau ,\lambda \right) $ is symplectic.

In the following we shall denote $\frac \partial {\partial \lambda }(
\cdot) \equiv ( \cdot )^{\prime }$ the eigenvalue
derivative, the derivative with respect to $\lambda $.

We define 
\begin{equation*}
M_1=\dot{\Phi}\Phi ^{-1}\text{, \ }S_1=-JM_1\text{.}
\end{equation*}
and 
\begin{equation*}
M_2=\Phi ^{\prime }\Phi ^{-1}\text{, \ }S_2=-JM_2\text{.}
\end{equation*}

Notice that, as $\Phi $, $M_j$ and $S_j$ are both time and eigenvalue
dependent, we shall use, as we did already before, the notations $\Phi
\equiv \Phi \left( \tau \right) \equiv \Phi \left( \tau ,\lambda \right) $, $%
M_j\equiv M_j\left( \tau \right) \equiv M_j\left( \tau ,\lambda \right) $, $%
S_j\equiv S_j\left( \tau \right) \equiv S_j\left( \tau ,\lambda \right) $,
and so on ($j=1,2$). We also naturally denote
\begin{equation*}
\Phi =
\begin{bmatrix}
Q_2 & Q_1 \\ 
P_2 & P_1
\end{bmatrix}
\text{,\quad}
S_j=
\begin{bmatrix}
A_j & B_j^{*} \\ 
B_j & C_j
\end{bmatrix}
\text{,}
\end{equation*}
and assume that $C_1$ and $C_2$ are $C^1$ functions.

Let $\epsilon _{1}=\pm 1$, $\epsilon _{2}=\pm 1$, $\epsilon =\epsilon
_{1}\epsilon _{2}$.

Let $\tau _{0}\geq 0$ and $\chi :\left] \tau _{0},T\right[ \rightarrow
\left] l_{-1},l_{1}\right[ $ a continuous function, such that $\epsilon \chi 
$ is strictly decreasing and $\lim_{\tau \rightarrow T}\epsilon \chi \left(
\tau \right) =\epsilon l_{0}\geq \epsilon l_{-\epsilon }$ and $\lim_{\tau
\rightarrow \tau _{0}}\chi \left( \tau \right) =l_{\epsilon _{{}}}$.

Assume that 
\begin{equation}
\det Q_{2}\left( \tau ,\lambda \right) =0\Rightarrow \epsilon \left( \lambda
-\chi \left( \tau \right) \right) >0\text{,}  \label{e}
\end{equation}
and that
\begin{equation}
\epsilon \left( \lambda -\chi \left( \tau \right) \right) >0\Rightarrow
\left\{ \epsilon _{1}C_{1}\left( \tau ,\lambda \right) >0\wedge \epsilon
_{2}C_{2}\left( \tau ,\lambda \right) >0\right\} .  \label{f}
\end{equation}

\begin{theorem}
\label{T2}Under Conditions \eqref{e} and \eqref{f}, equation 
\begin{equation*}
\det Q_2\left( \tau ,\lambda \right) =0\text{,}
\end{equation*}
defines implicitly $n$ sets of continuous functions $\tau \mapsto \lambda
_{jk}\left( \tau \right) $, ($j=1,2,\ldots ,n$), with the index $k\in \mathbb{Z}
$ and bounded below. Some of these sets, or all, may be empty. In each
nonempty set these functions have a natural order: $\epsilon \lambda
_{jk}\left( \tau \right) <\epsilon \lambda _{j,k+1}\left( \tau \right)
<\epsilon \lambda _{j,k+2}\left( \tau \right) <\cdots $.

Let $l\in \left] l_{-1},l_1\right[ $ and $t\in \left[ 0,T\right[ $, and
assume that $\det Q_2\left( t,l\right) \neq 0$. Denote by $\mu _j$ the
cardinal of the set $\left\{ k\in \mathbb{N}:\epsilon \left( \lambda
_{jk}\left( t\right) -l\right) <0\right\} $ and let $\mu =\sum_{j=1}^n\mu _j$%
. Then, $\mu $ is the number of times, counting the multiplicities, that $%
Q_2\left( \tau ,l\right) $ is singular, for $\tau <t$.
\end{theorem}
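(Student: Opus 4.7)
The key idea is to apply Theorem~\ref{T3} in two ways: once to the $\tau$-flow with $\lambda$ a parameter, and once to the $\lambda$-flow with $\tau$ a parameter. This yields two polar factorisations
\begin{equation*}
Q_{2}(\tau ,\lambda )=r(\tau ,\lambda )\cos \varphi (\tau ,\lambda )=\tilde{r}(\tau ,\lambda )\cos \tilde{\varphi}(\tau ,\lambda ),
\end{equation*}
with $\det r\neq 0$, $\det \tilde{r}\neq 0$, eigenvalues $\theta _{j}(\tau ,\lambda )$ of $\varphi $ that are $C^{1}$ in $\tau$ and eigenvalues $\tilde{\theta}_{j}(\tau ,\lambda )$ of $\tilde{\varphi}$ that are $C^{1}$ in $\lambda$, all jointly continuous in $(\tau ,\lambda )$. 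In either representation $\det Q_{2}(\tau ,\lambda )=0$ iff some eigenvalue equals $\pi /2+k\pi $ for some integer $k$.

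By hypothesis \eqref{e} every zero of $\det Q_{2}$ lies in the region $\epsilon (\lambda -\chi (\tau ))>0$, and by \eqref{f} this region is exactly where $\epsilon _{1}C_{1}>0$ and $\epsilon _{2}C_{2}>0$. Diagonalising \eqref{k} in an eigenbasis of $\varphi$ yields the first-order perturbation identity $\dot{\theta}_{j}=(v_{j},r^{-1}Cr^{\ast -1}v_{j})/(v_{j},v_{j})$, whose sign is controlled by $C$ exactly as in the proof of Theorem~\ref{T1}; thus at any singular point $\epsilon _{1}\partial _{\tau }\theta _{j}>0$ and, by the analogous argument in the $\lambda $-polar factorisation, $\epsilon _{2}\partial _{\lambda }\tilde{\theta}_{j}>0$. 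The implicit function theorem applied to $\tilde{\theta}_{j}(\tau ,\lambda )=\pi /2+k\pi $ then resolves the singular locus locally into $C^{1}$ graphs $\lambda =\lambda _{jk}(\tau )$, indexed by $j\in \{1,\dots ,n\}$ and integer $k$ bounded below by the range of $\tilde{\theta}_{j}$ on the allowed region; the ordering $\epsilon \lambda _{jk}(\tau )<\epsilon \lambda _{j,k+1}(\tau )$ follows directly from the $\epsilon _{2}$-monotonicity of $\tilde{\theta}_{j}$. A further computation along a singular curve, using $\dot{Q}_{2}=B_{1}Q_{2}+C_{1}P_{2}$ and its $\lambda $-analogue applied to a common null vector of $Q_{2}$, shows that $\lambda _{jk}^{\prime }(\tau )$ has constant sign $-\epsilon $, so each branch is $\epsilon $-monotone in $\tau $.

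For the counting, fix $l\in \left] l_{-1},l_{1}\right[ $ with $\det Q_{2}(t,l)\neq 0$ and use the $\tau $-polar factorisation at $\lambda =l$ on $[0,t]$. Each singular time $\tau _{\ast }<t$ is an instant where some $\theta _{j}(\tau _{\ast },l)=\pi /2+k\pi $; the $\epsilon _{1}$-monotonicity makes the crossing transverse, so the multiplicity at $\tau _{\ast }$ equals the number of pairs $(j,k)$ hit, and each such pair marks the curve $\lambda _{jk}$ passing through $(\tau _{\ast },l)$. Since every $\lambda _{jk}$ is $\epsilon $-monotone in $\tau $ and begins on the $+\epsilon $ side of $l$ (forced by $\lim_{\tau \to \tau _{0}}\chi (\tau )=l_{\epsilon }$ together with $\epsilon \lambda _{jk}>\epsilon \chi$), the condition $\epsilon (\lambda _{jk}(t)-l)<0$ is equivalent to $\lambda _{jk}$ crossing the line $\lambda =l$ at a unique $\tau _{\ast }\in [0,t)$. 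Summing over $j$ gives $\mu =\sum _{j}\mu _{j}$ as required.

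The main obstacle is justifying the sign of $\lambda _{jk}^{\prime }(\tau )$: the $\tau $-polar representation gives no direct control on $\partial _{\lambda }\theta _{j}$, and the $\lambda $-polar representation gives no direct control on $\partial _{\tau }\tilde{\theta}_{j}$, so the sign must be extracted by a first-order perturbation on $Q_{2}v=0$ along the singular curve that mixes both flows and invokes the symplectic identity $Q_{1}^{\ast }P_{2}v=-v$ to express everything as a ratio of definite quadratic forms in $C_{1}$ and $C_{2}$. Handling higher-multiplicity singularities, where several eigenvalues cross simultaneously, requires additional bookkeeping but follows the same scheme.
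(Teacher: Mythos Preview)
Your approach is workable, but it takes an unnecessary detour compared with the paper. You introduce \emph{two} polar factorisations---one in $\tau$, one in $\lambda$---and then face the ``main obstacle'' of reconciling them: computing the sign of $\lambda_{jk}'(\tau)$ by a separate perturbation argument on $Q_{2}v=0$, and then matching up the $(j,k)$-labelling coming from the $\theta_{j}$'s with the one coming from the $\tilde\theta_{j}$'s when you do the counting. The perturbation argument you sketch does go through (differentiating $Q_{2}v=0$ along the curve, using $\dot Q_{2}=B_{1}Q_{2}+C_{1}P_{2}$, $Q_{2}'=B_{2}Q_{2}+C_{2}P_{2}$, the symmetry of $Q_{2}^{\ast}P_{2}$, and $Q_{1}^{\ast}P_{2}v=-v$ to arrive at $\lambda_{jk}'=-\dfrac{(P_{2}v,C_{1}P_{2}v)}{(P_{2}v,C_{2}P_{2}v)}$, which has sign $-\epsilon_{1}\epsilon_{2}=-\epsilon$), but the labelling consistency between the two representations is not addressed and is the genuine soft spot in the write-up.

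The paper avoids all of this by observing that, by \eqref{e}, every zero of $\det Q_{2}$ lies in the open region $\mathcal{D}=\{(\tau,\lambda):\epsilon(\lambda-\chi(\tau))>0\}$, and on $\mathcal{D}$ condition \eqref{f} gives \emph{simultaneously} $\epsilon_{1}C_{1}>0$ and $\epsilon_{2}C_{2}>0$. Hence Theorem~\ref{T1} (not merely Theorem~\ref{T3}) applies in both directions at once, producing a \emph{single} polar representation $Q_{2}=r\cos\varphi$, $Q_{1}=r\sin\varphi$ with $\varphi$ jointly $C^{1}$ on $\mathcal{D}$ and eigenvalues $\varphi_{j}(\tau,\lambda)$ satisfying $\epsilon_{1}\partial_{\tau}\varphi_{j}>0$ and $\epsilon_{2}\partial_{\lambda}\varphi_{j}>0$ throughout. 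The implicit function theorem applied to $\varphi_{j}(\tau,\lambda)=\pi/2+k\pi$ then gives the curves $\lambda_{jk}$ directly, with $\lambda_{jk}'(\tau)=-\partial_{\tau}\varphi_{j}/\partial_{\lambda}\varphi_{j}$ of sign $-\epsilon$ for free; the counting is a one-line equivalence (there is a $\tau<t$ with $\varphi_{j}(\tau,l)=\pi/2+k\pi$ iff $\epsilon(\lambda_{jk}(t)-l)<0$), and no second representation, perturbation argument, or multiplicity bookkeeping is needed. What your route buys is an alternative computation of $\lambda_{jk}'$ that does not appeal to a two-parameter polar theorem; what the paper's route buys is that the whole proof collapses to monotonicity of a single family $\varphi_{j}$ in both variables.
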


\begin{proof}
As the proof for $\epsilon =-1$ is similar, suppose that $\epsilon =1$.
Define 
\begin{equation*}
\mathcal{D}\mathbb{=}\left\{ \left( \tau ,\lambda \right) :\tau \in \left] \tau
_0,T\right[ ,\lambda \in \left] l_{-1},l_1\right[ ,\lambda >\chi \left( \tau
\right) \right\}
\end{equation*}

From Theorem \ref{T1}, one has that 
\begin{equation*}
Q_1( \tau ,\lambda ) =r( \tau ,\lambda ) \sin \varphi
( \tau ,\lambda ) \text{,}
\end{equation*}
\begin{equation*}
Q_2( \tau ,\lambda ) =r( \tau ,\lambda ) \cos \varphi
( \tau ,\lambda ) \text{,}
\end{equation*}
where $r( \tau ,\lambda ) $, $\varphi ( \tau ,\lambda
) $, for $( \tau ,\lambda ) \in \mathcal{D}$, are $C^1$
matrix-valued functions such that $\det r( \tau ,\lambda ) \neq 0$
and $\varphi ( \tau ,\lambda ) $ is symmetric for every $(
\tau ,\lambda ) $ and the eigenvalues of $\varphi $ are $C^1$
functions of $\tau $ and $\lambda $. Denote $\varphi _1( \tau ,\lambda
) ,\linebreak[0]\ldots ,\varphi _n( \tau ,\lambda ) $ such eigenvalues.
Then $\epsilon _1\dot{\varphi}_1( \tau ,\lambda ) ,\linebreak[0]\ldots
,\linebreak[0]\epsilon _1\dot{\varphi}_n( \tau ,\lambda ) $ and
$\epsilon _2\varphi _1^{^{\prime }}( \tau
,\lambda ) ,\linebreak[0]\ldots ,\linebreak[0]
\epsilon _2\varphi _n^{^{\prime }}( \tau,\lambda ) $
are positive continuous functions, for $( \tau
,\lambda ) \in \mathcal{D}$. The matrix $Q_2( \tau ,l) $,
with $\tau <t$, is singular if, with $\lambda =l$, 
\begin{equation}
\varphi _j( \tau ,\lambda ) =\frac \pi 2+k\pi \text{,}
\label{J13}
\end{equation}
for some $j=1,\ldots ,n$ and $k\in \mathbb{Z}$.

Notice that $\varphi _j( \tau ,\lambda ) >\varphi _j(
0,\lambda ) $, so that the set of possible $k$ either is empty or is
bounded below.

Consider the sets $\Lambda _{jk}$ defined by equation \eqref{J13}: 
\begin{equation*}
\Lambda _{jk}=\left\{ \left( \tau ,\lambda \right) \in \mathcal{D}:\varphi
_j\left( \tau ,\lambda \right) =\frac \pi 2+k\pi \right\} \text{,}
\end{equation*}
If one of the sets $\Lambda _{jk}$ is not empty, then, locally, it defines a
function $\lambda _{jk}\left( \tau \right) $, and 
\begin{equation*}
\frac{d\lambda _{jk}}{d\tau }\left( \tau \right) =-\frac{\partial \varphi _j%
}{\partial \tau }\left( \tau ,\lambda _{jk}\left( \tau \right) \right)
\left( \frac{\partial \varphi _j}{\partial \lambda }\left( \tau ,\lambda
_{jk}\left( \tau \right) \right) \right) ^{-1}\text{,}
\end{equation*}
because $\epsilon _1/\epsilon _2=1$.

Therefore, $\dot{\lambda}_{jk}\left( \tau \right) <0$. Hence, the sets $%
\Lambda _{jk}$ defined by \eqref{J13} are totally ordered: $\left( \tau
_1,\lambda _1\right) \succ \left( \tau _2,\lambda _2\right) $ if $\tau
_1>\tau _2$ and $\lambda _1<\lambda _2$. $\Lambda _{jk}$ has an infimum $%
\left( t_{jk},l_{jk}\right) $. The case $t_{jk}>0$ and $l_{jk}<l_1$ can not
happen from the implicit function theorem. The case $t_{jk}=0$ and $%
l_{jk}<l_1$ is impossible as formula \eqref{e} makes clear. Hence, $%
t_{jk}\geq 0$ and $l_{jk}=l_1$.

Hence, $\lambda _{jk}$ are $C^1$ functions $\lambda _{jk}\left( \tau \right)
:\left] t_{jk},T\right[ \rightarrow \mathbb{R}$, such that 
\begin{equation*}
\lim_{\tau \rightarrow t_{jk}}\lambda _{jk}\left( \tau \right) =l_1\text{ , }%
\frac d{d\tau }\lambda _{jk}\left( \tau \right) <0\text{ , }\varphi _j\left(
\tau ,\lambda _{jk}\left( \tau \right) \right) =\frac \pi 2+k\pi \text{.}
\end{equation*}

We remark that, namely from \eqref{e}, we have 
\begin{equation*}
\lambda _{j,k+1}\left( \tau \right) >\lambda _{jk}\left( \tau \right) >\chi
\left( \tau \right) \text{.}
\end{equation*}
Hence, one has that the following three assertions are equivalent:

a) There is a $\tau <t$, such that $\lambda _{jk}\left( \tau \right) =l$.

b) There is a $\tau <t$, such that $\varphi _j\left( \tau ,l\right) =\frac
\pi 2+k\pi $.

c) $\lambda _{jk}\left( t\right) <l$.

From this, the theorem follows.
\end{proof}

\section{Some formulas}

\noindent As before, let $\tau \in \left[ 0,t\right] \subset \left[ 0,T\right[ $ and $%
\lambda \in \left] l_{-1},l_{1}\right[ \subset \mathbb{R}$. Consider the $C^{1}$
function $\left( \tau ,\lambda \right) \mapsto \Phi \left( \tau ,\lambda
\right) $, where $\Phi \left( \tau ,\lambda \right) $ is symplectic. We
define 
\begin{equation*}
M_{1}=\dot{\Phi}\Phi ^{-1}\text{,\quad}S_{1}=-JM_{1}\text{.}
\end{equation*}

Notice that, as $\Phi $, $M_1$ and $S_1$ are both time and eigenvalue
dependent, we shall use, as we did already before, the notations $\Phi
\equiv \Phi \left( \tau \right) \equiv \Phi \left( \tau ,\lambda \right) $, $%
M_1\equiv M_1\left( \tau \right) \equiv M_1\left( \tau ,\lambda \right) $, $%
S_1\equiv S_1\left( \tau \right) \equiv S_1\left( \tau ,\lambda \right) $,
and so on.

In the following we shall denote $\frac \partial {\partial \lambda }
(\cdot) \equiv (\cdot)^{\prime }$ the eigenvalue
derivative, the derivative with respect to $\lambda $.

It is now natural to compute $\Phi ^{\prime }$ and $\Phi ^{\prime }\Phi
^{-1}\equiv M_2$.

Deriving both members of $\dot{\Phi}=M_{1}\Phi $ in order to $\lambda $, one
obtains 
\begin{equation}
\dot{\Phi}^{\prime }=M_{1}^{\prime }\Phi +M_{1}\Phi ^{\prime }\text{.}
\label{g}
\end{equation}

We shall use now the variations of parameters method. Write $\Phi ^{\prime
}=\Phi K$, where $K$ is both time and eigenvalue dependent: $K\equiv K\left(
\tau ,\lambda \right) $.

Let $K_{0}=K\left( 0,\lambda \right) \equiv K\left( 0\right) $. As $K\left(
0,\lambda \right) =\Phi ^{-1}\left( 0\right) \Phi ^{\prime }\left( 0\right) $,
and 
\begin{equation*}
\Phi \left( \tau \right) =\Phi \left( 0\right) +\int_{0}^{\tau
_{{}}}M_{1}\left( \sigma \right) \Phi \left( \sigma \right) d\sigma \text{,}
\end{equation*}
one has 
\begin{equation*}
\Phi ^{\prime }\left( \tau \right) =\left( \Phi \left( 0\right) \right)
^{\prime }+\int_{0}^{\tau _{{}}}\left( M_{1}\left( \sigma \right) \Phi
\left( \sigma \right) \right) ^{\prime }d\sigma \text{.}
\end{equation*}
Hence, $\Phi ^{\prime }\left( 0\right) =\left( \Phi \left( 0\right) \right)
^{\prime }$ and $K_0=K\left( 0,\lambda \right) =\Phi ^{-1}\left( 0\right)
\left( \Phi \left( 0\right) \right) ^{\prime }$.

On the other hand, one obtains 
\begin{equation}
\dot{\Phi}^{\prime }=\dot{\Phi}K+\Phi \dot{K}=M_{1}\Phi K+\Phi \dot{K}%
=M_{1}\Phi ^{\prime }+\Phi \dot{K}\text{.}  \label{h}
\end{equation}

Comparing \eqref{g} with \eqref{h}, one has 
\begin{equation*}
M_{1}^{\prime }\Phi =\Phi \dot{K}\text{.}
\end{equation*}
From this one concludes that $\dot{K}=\Phi ^{-1}M_{1}^{\prime }\Phi $.
Therefore 
\begin{equation*}
K\left( \tau \right) =K_{0}+\int_{0}^{\tau _{{}}}\Phi ^{-1}\left( \sigma
\right) M_{1}^{\prime }\left( \sigma \right) \Phi \left( \sigma \right)
d\sigma \text{.}
\end{equation*}

From now on we shall use the notations: 
\begin{equation*}
F\left( \tau ,\sigma \right) =\Phi \left( \tau \right) \Phi ^{-1}\left(
\sigma \right) \text{,\quad}F_{0}\left( \tau ,\sigma \right) =\Phi
_{0}\left( \tau \right) \Phi _{0}^{-1}\left( \sigma \right) \text{.}
\end{equation*}

Then
\begin{equation*}
\begin{split}
M_{2}\left( \tau \right) &\equiv
\Phi ^{\prime }\Phi ^{-1}=\Phi K\Phi ^{-1}\\
&=\Phi \left( \tau \right) \Phi ^{-1}\left( 0\right) \left( \Phi \left(
0\right) \right) ^{\prime }\Phi ^{-1}\left( \tau \right) \\
&\quad +\int_{0}^{\tau _{{}}}F\left( \tau ,\sigma \right) M_{1}^{\prime }\left(
\sigma \right) \Phi \left( \sigma \right) F^{-1}\left( \tau ,\sigma \right)
d\sigma \text{.}
\end{split}
\end{equation*}

Notice that, if $V$ is any $2n\times 2n$ eigenvalue dependent matrix, 
\begin{multline*}
\int_{0}^{\tau _{{}}}\Phi ^{-1}\left( \sigma \right) VM_{1}\left( \sigma
\right) \Phi \left( \sigma \right) d\sigma =\int_{0}^{\tau _{{}}}\Phi
^{-1}\left( \sigma \right) V\dot{\Phi}\left( \sigma \right) d\sigma \\
=\left[ \Phi ^{-1}\left( \sigma \right) V\Phi \left( \sigma \right) \right]
_{0}^{\tau _{{}}}+\int_{0}^{\tau _{{}}}\Phi ^{-1}\left( \sigma \right)
M_{1}\left( \sigma \right) V\Phi \left( \sigma \right) d\sigma \text{.}
\end{multline*}

Hence, 
\begin{equation*}
\begin{split}
M_{2}\left( \tau \right) &= \Phi \left( \tau \right) \left( \Phi ^{-1}\left(
0\right) \left( \Phi \left( 0\right) \right) ^{\prime }+\left[ \Phi
^{-1}\left( \sigma \right) V\Phi \left( \sigma \right) \right] _{0}^{\tau
_{{}}}\right) \Phi ^{-1}\left( \tau \right) \\
&\quad+\int_{0}^{\tau _{{}}}F\left( \tau ,\sigma \right) G_{1}F^{-1}\left( \tau
,\sigma \right) d\sigma \text{,}
\end{split}
\end{equation*}
with 
\begin{equation}
G_{1}\equiv M_{1}^{\prime }\left( \sigma \right) -VM_{1}\left( \sigma
\right) +M_{1}\left( \sigma \right) V  \label{v}
\end{equation}
or, equivalently, 
\begin{equation*}
\begin{split}
M_{2}\left( \tau \right) &=V+\Phi \left( \tau \right) \Phi ^{-1}\left(
0\right) \left( \left( \Phi \left( 0\right) \right) ^{\prime }-V\Phi \left(
0\right) \right) \Phi ^{-1}\left( \tau \right) + \\
&\quad+\int_{0}^{\tau _{{}}}F\left( \tau ,\sigma \right) G_{1}F^{-1}\left( \tau
,\sigma \right) d\sigma \text{.}
\end{split}
\end{equation*}

Choosing 
\begin{equation}
V=\left( \Phi \left( 0\right) \right) ^{\prime }\Phi ^{-1}\left( 0\right) 
\text{,}  \label{c}
\end{equation}
one has 
\begin{equation}
M_{2}\left( \tau \right) =V+\int_{0}^{\tau _{{}}}F\left( \tau ,\sigma
\right) G_{1}F^{-1}\left( \tau ,\sigma \right) d\sigma \text{,}  \label{d}
\end{equation}
with $V$ defined by \eqref{c} and $G_{1}$\ defined by \eqref{v}.

Equation \eqref{d} can be written 
\begin{equation*}
\begin{split}
M_2\left( \tau \right) &=\left( \Phi \left( 0\right) \right) ^{\prime }\Phi
^{-1}\left( 0\right) \\
&\quad +\int_0^{\tau _{}}F\left( \tau ,\sigma \right) G_2F^{-1}\left( \tau
,\sigma \right) d\sigma \text{,}
\end{split}
\end{equation*}
with 
\begin{equation*}
G_2\equiv \Phi \left( 0\right) \left( \Phi ^{-1}\left( 0\right) M_1\left(
\sigma \right) \Phi \left( 0\right) \right) ^{\prime }\Phi ^{-1}\left(
0\right) \text{.}
\end{equation*}

\section{First remarkable case}

\noindent Let us take 
\begin{equation*}
\Phi =L_1\Phi _0L_2\text{,}
\end{equation*}
where 
\begin{equation*}
\dot{\Phi}_0=M_0\Phi _0\text{, \ }M_0=-JS_0\text{.}
\end{equation*}

$L_{1}$ and $L_{2}$ are both symplectic or both antisymplectic and
eigenvalue dependent: $L_{1}\equiv L_{1}\left( \lambda \right) $, $%
L_{2}\equiv L_{2}\left( \lambda \right) $. As before, $\Phi $, $\Phi _{0}$, $%
M_{0}$ and $S_{0}$ are both time and eigenvalue dependent: $\Phi \equiv \Phi
\left( \tau \right) \equiv \Phi \left( \tau ,\lambda \right) $, $\Phi
_{0}\equiv \Phi _{0}\left( \tau \right) \equiv \Phi _{0}\left( \tau ,\lambda
\right) $, $M_{0}\equiv M_{0}\left( \tau \right) \equiv M_{0}\left( \tau
,\lambda \right) $, $S_{0}\equiv S_{0}\left( \tau \right) \equiv S_{0}\left(
\tau ,\lambda \right) $ and so on.

As $\dot{\Phi}=L_1\dot{\Phi}_0L_2=L_1M_0\Phi _0L_2=L_1M_0L_1^{-1}\Phi $, one
has 
\begin{align*}
M_1&=L_1M_0L_1^{-1}\text{,}\\
K_0&=L_2^{-1}L_1^{-1}\left( L_1L_2\right) ^{\prime }\text{.}
\end{align*}

Then 
\begin{equation*}
\begin{split}
M_{2}\left( \tau \right) &=L_{1}\Phi _{0}\left( \tau \right)
L_{1}^{-1}\left( L_{1}L_{2}\right) ^{\prime }L_{2}^{-1}\Phi _{0}^{-1}\left(
\tau \right) L_{1}^{-1} \\
&\quad +\int_{0}^{\tau _{{}}}F\left( \tau ,\sigma \right) M_{1}^{\prime }\left(
\sigma \right) F^{-1}\left( \tau ,\sigma \right) d\sigma \text{,}
\end{split}
\end{equation*}
and 
\begin{equation*}
M_{2}\left( \tau \right) =V+\int_{0}^{\tau _{{}}}F\left( \tau ,\sigma
\right) G_{3}F^{-1}\left( \tau ,\sigma \right) d\sigma \text{,}
\end{equation*}
where 
\begin{equation*}
V=\left( L_{1}L_{2}\right) ^{\prime }\left( L_{1}L_{2}\right) ^{-1}\text{,}
\end{equation*}
and 
\begin{equation*}
G_{3}\equiv M_{1}^{\prime }\left( \sigma \right) -VM_{1}\left( \sigma
\right) +M_{1}\left( \sigma \right) V\text{.}
\end{equation*}

One also has the formula 
\begin{equation}
M_{2}\left( \tau \right) =V+\int_{0}^{\tau _{{}}}L_{1}F_{0}\left( \tau
,\sigma \right) G_{4}F_{0}^{-1}\left( \tau ,\sigma \right) L_{1}^{-1}d\sigma
,  \label{i}
\end{equation}
where 
\begin{equation*}
G_{4}\equiv M_{0}^{\prime }+M_{0}L_{2}^{\prime }L_{2}^{-1}-L_{2}^{\prime
}L_{2}^{-1}M_{0}\text{.}
\end{equation*}

\begin{remark}
If $\left( L_1\right) _{12}=0$, $\det \left( \left( L_1\right) _{11}\right)
\neq 0$ and $C_0>0$ ($C_0<0$), then $C_1=\left( L_1\right) _{11}C_0\left(
L_1\right) _{11}^{*}>0$ ($<0$).
\end{remark}

\subsection{Example: the Morse index theorem}
\ \\
\noindent
Let $N$ a symmetric $n\times n$ matrix. Define $Q_{1}=Q_{s}$ and 
$Q_{2}=Q_{c}+Q_{s}N$. Then $Q_{1}$ and $Q_{2}$ are isotropic, $W=I$. Hence,
from Theorem \ref{T1}, one has that 
\begin{equation*}
Q_{1}\left( \tau \right) =Q_{s}\left( \tau \right) =r\left( \tau \right)
\sin \varphi \left( \tau \right) \text{,}
\end{equation*}
\begin{equation}
Q_{2}\left( \tau \right) =Q_{c}\left( \tau \right) +Q_{s}\left( \tau \right)
N=r\left( \tau \right) \cos \varphi \left( \tau \right) \text{,}  \label{J12}
\end{equation}
where $r\left( \tau \right) $, $\varphi \left( \tau \right) $, for $\tau \in
\left[ 0,T\right[ $, are $C^{1}$ matrix-valued functions such that $\det
r\left( \tau \right) \neq 0$ and $\varphi \left( \tau \right) $ is symmetric
for every $\tau $ and the eigenvalues of $\varphi $ are $C^{1}$ functions of 
$\tau $. Denote $\varphi _{1}\left( \tau \right) ,\ldots ,\varphi _{n}\left(
\tau \right) $ such eigenvalues, with $\varphi _{j}\left( 0\right) =0$. Then 
$\dot{\varphi}_{1}\left( \tau \right) ,\ldots ,\dot{\varphi}_{n}\left( \tau
\right) $ are positive continuous functions.

Let $t\in \left[ 0,T\right[ $. Assume that $Q_{2}\left( t\right) $ is
invertible and that $\varphi _{j}\left( 0\right) =0$, $j=1,\ldots ,n$, and
define $\mu _{j}\in \mathbb{Z}$, such that 
\begin{equation*}
-\frac{\pi }{2}+\mu _{j}\pi <\varphi _{j}\left( t\right) <\frac{\pi }{2}+\mu
_{j}\pi \text{.}
\end{equation*}

Define the index $\mu $: 
\begin{equation}
\mu =\sum_{j=1}^n\mu _j\text{.}  \label{J9}
\end{equation}
Then, $\mu $ is the number of times that $Q_{2}\left( \tau \right) $ is
singular, for $\tau \in \left[ 0,t\right] $, taking into account the
multiplicity of the singularity, i.e. the dimension of $\ker Q_{2}$.

Consider now the Lagrangian

\begin{equation*}
L\left( q,\dot{q},\tau \right) =\frac{1}{2}\left( \dot{q},C\left( \tau
\right) ^{-1}\dot{q}\right) -\left( \dot{q},C\left( \tau \right)
^{-1}B\left( \tau \right) q\right) -\frac{1}{2}\left( q,\mathcal{A}\left(
\tau \right) q\right) \text{,}
\end{equation*}
where $\mathcal{A}=A-B^{\ast }C^{-1}B$.

Consider now the real separable Hilbert space $\mathcal{H}$, whose elements
are the continuous functions $\gamma :\left[ 0,t\right] \rightarrow \mathbb{R}%
^{n}$, 
\begin{equation*}
\gamma \left( \tau \right) =-\int_{\tau _{{}}}^{t_{{}}}\dot{\gamma}\left(
\sigma \right) d\sigma \text{,}
\end{equation*}
for $\dot{\gamma}\in L^{2}\left( \left[ 0,t\right] ;\mathbb{R}^{n}\right) $.
The inner product $\left\langle .,.\right\rangle $ in $\mathcal{H}$ is
defined by 
\begin{equation*}
\left\langle \gamma _{1},\gamma _{2}\right\rangle =\int_{0}^{t}\left( \dot{%
\gamma}_{1}\left( \tau \right) ,C\left( \tau \right) ^{-1}\dot{\gamma}%
_{2}\left( \tau \right) \right) d\tau \text{.}
\end{equation*}
One denotes $\left\langle \gamma ,\gamma \right\rangle =\left\| \gamma
\right\| ^2$.

To the Lagrangian $L$ corresponds the action 
\begin{equation*}
\mathcal{S}\left( \gamma \right) =\int_{0}^{t}L\left( \gamma \left( \tau
\right) ,\dot{\gamma}\left( \tau \right) ,\tau \right) d\tau +\frac{1}{2}%
\left( \gamma \left( 0\right) ,N\gamma \left( 0\right) \right) \text{,}
\end{equation*}
where $N$, as before, is a symmetric $n\times n$ matrix.

The quadratic form $\mathcal{S}:\mathcal{H}\rightarrow \mathbb{R}$, defines a
symmetric operator $\mathcal{L}\left( t\right) \equiv \mathcal{L}:\mathcal{%
H\rightarrow H}$, $\mathcal{S}\left( \gamma \right) =\frac{1}{2}\left\langle
\gamma ,\mathcal{L}\gamma \right\rangle $, 
\begin{equation*}
\begin{split}
\left\langle \gamma _{1},\mathcal{L}\gamma _{2}\right\rangle
&=\int_{0}^{t}\left( \dot{\gamma}_{1}\left( \tau \right) ,C\left( \tau
\right) ^{-1}\dot{\gamma}_{2}\left( \tau \right) \right) d\tau \\
&\quad -\int_{0}^{t}\left( \dot{\gamma}_{1}\left( \tau \right) ,C\left( \tau
\right) ^{-1}B\left( \tau \right) \gamma _{2}\left( \tau \right) \right)
d\tau \\
&\quad -\int_{0}^{t}\left( \dot{\gamma}_{2}\left( \tau \right) ,C\left( \tau
\right) ^{-1}B\left( \tau \right) \gamma _{1}\left( \tau \right) \right)
d\tau \\
&\quad 
-\int_{0}^{t}\left( \gamma _{1}\left( \tau \right) ,\mathcal{A}\left( \tau
\right) \gamma _{2}\left( \tau \right) \right) d\tau 
+\left( \gamma_{1}\left( 0\right) ,N\gamma _{2}\left( 0\right) \right) \text{,}
\end{split}
\end{equation*}
which has the following expression
\begin{equation*}
\begin{split}
\left( \mathcal{L}\gamma \right) \left( \tau \right) &=\gamma \left( \tau
\right) +\int_{\tau _{{}}}^{t_{{}}}B\left( \sigma \right) \gamma \left(
\sigma \right) d\sigma \\
&\quad -\int_{\tau _{{}}}^{t_{{}}}C\left( \sigma \right) d\sigma
\int_{0_{{}}}^{\sigma _{{}}}B^{\ast }\left( \theta \right) C\left( \theta
\right) ^{-1}\dot{\gamma}\left( \theta \right) d\theta \\
&\quad -\int_{\tau _{{}}}^{t_{{}}}C\left( \sigma \right) d\sigma
\int_{0_{{}}}^{\sigma _{{}}}\mathcal{A}\left( \theta \right) \gamma \left(
\theta \right) d\theta +\int_{\tau _{{}}}^{t_{{}}}C\left( \sigma \right)
d\sigma N\gamma \left( 0\right) \text{.}
\end{split}
\end{equation*}

$\mathcal{L}$ is the sum of four symmetric operators. The first one is the
identity. The second one, which involves $B$, is a Hilbert-Schmidt operator.
The third one, which involves $\mathcal{A}$, is a trace class operator. The
forth one, which involves $N$, is a finite rank operator.

The eigenvalues $\lambda $ of $\mathcal{L}$ are given by the equation 
\begin{equation}
\mathcal{L}\gamma =\lambda \gamma \text{,\quad}\gamma \in \mathcal{H}
\text{,\quad}\gamma \neq 0\text{.}  \label{J3}
\end{equation}

Assume that $\lambda \neq 1$ and put $\varepsilon =\left( 1-\lambda \right)
^{-1}$. As $\frac{d\varepsilon }{d\lambda }=\left( 1-\lambda \right) ^{-2}>0$%
, we shall use $\varepsilon $ instead of $\lambda $ as a parameter, and $%
\left( \cdot \right) ^{\prime }\equiv \frac{\partial }{\partial \varepsilon }%
\left( \cdot \right) $.

Then, one has 
\begin{equation}
\left| \varepsilon \right| >\left( at+bt^{2}\right) ^{-1}\text{,}  \label{x}
\end{equation}
where $a,b>0$ (see \cite{rezende4}).

Define 
\begin{align*}
A_1&=\varepsilon A+\left( \varepsilon ^2-\varepsilon \right)
B^{*}C^{-1}B=\varepsilon \mathcal{A}+\varepsilon ^2B^{*}C^{-1}B\\
B_1&=\varepsilon B,\quad
C_1=C\text{.}
\end{align*}

Call $\mathcal{L}_{\varepsilon _{{}}}$ the operator $\mathcal{L}$ where one
puts $A_{1}$, $B_{1}$, $C_{1}$ and $\varepsilon N$ instead of $A$, $B$, $C$
and $N$. Notice that $\mathcal{L}=\mathcal{L}_{1}$. Then equation \eqref{J3}
becomes 
\begin{equation*}
\mathcal{L}_{\varepsilon _{{}}}\gamma =0\text{,\quad}\gamma \in \mathcal{H}%
\text{ , }\gamma \neq 0\text{.}
\end{equation*}

This equation can be rewritten 
\begin{gather*}
\dot{\gamma}=B_{1}\gamma +C_{1}\beta \text{,\quad}\dot{\beta}=-A_{1}\gamma
-B_{1}^{\ast }\beta ,\\
\gamma \left( t\right) =0\text{,\quad}\beta \left( 0\right) -\varepsilon
N\gamma \left( 0\right) =0\text{.}
\end{gather*}

Put $L_{1}=I_{2n}$ and 
\begin{equation*}
L_{2}=
\begin{bmatrix}
fI_{n} & kfI_{n} \\ 
\varepsilon fN & f^{-1}I_{n}+k\varepsilon fN
\end{bmatrix}
\text{,}
\end{equation*}
where $k$ is constant and $f\equiv f\left( \varepsilon \right) \neq 0$.

Then $\Phi =L_{1}\Phi _{0}L_{2}=\Phi _{0}L_{2}$. Put $\Phi
_{11}=Q_{\varepsilon ,2}$, $\Phi _{12}=Q_{\varepsilon ,1}$ and so on. Hence, 
$Q_{\varepsilon ,2}=f\left( Q_{c}+\varepsilon Q_{s}N\right) $ and $%
Q_{2}=f^{-1}Q_{1,2}$.

Then $\left( L_{2}^{\prime }L_{2}^{-1}\right) _{12}=0$, and if $f+2f^{\prime
}\varepsilon =0$, 
\begin{equation*}
\left( L_{2}^{\prime }L_{2}^{-1}\right) _{22}=-\left(
L_{2}^{\prime }L_{2}^{-1}\right) _{11}=\left( 2\varepsilon \right) ^{-1},\quad
\left( L_{2}^{\prime }L_{2}^{-1}\right) _{21}=0.
\end{equation*}

Now, one computes $G_{4}$: 
\begin{equation*}
M_{0}^{\prime }+M_{0}L_{2}^{\prime }L_{2}^{-1}-L_{2}^{\prime
}L_{2}^{-1}M_{0}=
\begin{bmatrix}
B & \varepsilon ^{-1}C \\ 
-\varepsilon B^{\ast }C^{-1}B & -B^{\ast }
\end{bmatrix}
\text{.}
\end{equation*}

Denoting 
\begin{equation*}
\begin{bmatrix}
X & Z \\ 
W & Y
\end{bmatrix}
=\Phi _{0}\left( \tau \right) \Phi _{0}^{-1}\left( \sigma \right) 
\text{,}
\end{equation*}
one has 
\begin{equation*}
C_{2}=\varepsilon ^{-1}\int_{0}^{\tau _{{}}}\left( XC-\varepsilon ZB^{\ast
}\right) C^{-1}\left( CX^{\ast }-\varepsilon BZ^{\ast }\right) d\sigma \text{%
.}
\end{equation*}
Then $\varepsilon C_{2}>0$ for $\tau >0$.

From this, from \eqref{x} and from Theorem \ref{T2}\ one can easily state
the following theorem, whose complete proof can be seen in detail in \cite
{rezende4}.

\begin{theorem}
Let $\lambda \left( t\right) $ be an eigenvalue of the operator $\mathcal{L}%
\left( t\right) $. Then, there are three possibilities: 1) $\lambda \left(
t\right) =1$; 2) (and 3)) $\lambda \left( t\right) >1$ ($\lambda \left(
t\right) <1$); in this case there exists a $t_0\geq 0$ and a continuous
function $\lambda \left( \tau \right) $, for $\tau \in \left[ t_0,t\right] $%
, such that $\lambda \left( \tau \right) $ is an eigenvalue of the operator $%
\mathcal{L}\left( \tau \right) $ and $\lambda \left( t_0\right) =1$;
moreover, $\lambda \left( \tau \right) $ is $C^1$ in $\left] t_0,t\right] $
with $\dot{\lambda}\left( \tau \right) >0$ ($\dot{\lambda}\left( \tau
\right) <0$).

The eigenvalues of $\mathcal{L}\left( t\right) $ which are different from $1$
can be organized in $2n$ sets; $n$ for those $>1$, $n$ for those $<1$. Some
of these sets may be empty. In each set, the eigenvalues have a natural
order: $\lambda _0\left( \tau \right) >\lambda _1\left( \tau \right) >\cdots
>1$, or $\lambda _0\left( \tau \right) <\lambda _1\left( \tau \right)
<\cdots <1$, for every $\tau $. In particular, the eigenspace of $\lambda
\neq 1$ has at most dimension $n$.

Let $Q_2\equiv Q_c+Q_sN$, be a solution of the system \eqref{J1}. Then, $%
Q_2\left( t\right) $ is invertible if and only if $\mathcal{L}\left(
t\right) $ is invertible and the number of the negative eigenvalues of $%
\mathcal{L}$ (its Morse index) is $\mu $, as defined by \eqref{J9}.
\end{theorem}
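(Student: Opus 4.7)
The plan is to reduce the theorem to an application of Theorem \ref{T2}, carried out in the reparametrization $\varepsilon=(1-\lambda)^{-1}$ introduced just above. The preceding computations already supply the key data: the choice $\Phi=\Phi_{0}L_{2}$ makes $\Phi$ symplectic and gives $Q_{\varepsilon,2}(\tau,\varepsilon)=f(\varepsilon)\bigl(Q_{c}(\tau)+\varepsilon\,Q_{s}(\tau)N\bigr)$; since $f\neq 0$, the condition $\det Q_{\varepsilon,2}(\tau,\varepsilon)=0$ is equivalent to $\det\bigl(Q_{c}(\tau)+\varepsilon Q_{s}(\tau)N\bigr)=0$. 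Through the boundary-value problem $\dot{\gamma}=B_{1}\gamma+C_{1}\beta$, $\dot{\beta}=-A_{1}\gamma-B_{1}^{\ast}\beta$, $\gamma(t)=0$, $\beta(0)=\varepsilon N\gamma(0)$, this is precisely the statement that $\mathcal{L}_{\varepsilon}\gamma=0$ admits a nonzero $\gamma$, i.e.\ that $\lambda=1-\varepsilon^{-1}$ is an eigenvalue of $\mathcal{L}(t)$. Thus locating eigenvalues of $\mathcal{L}(t)$ distinct from $1$ is the same as locating zeros of $\det Q_{\varepsilon,2}(t,\,\cdot\,)$.

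Next I would split according to the sign of $\varepsilon$. In the regime $\varepsilon>0$ (i.e.\ $\lambda<1$) I take $\epsilon_{1}=\epsilon_{2}=\epsilon=1$; in the regime $\varepsilon<0$ (i.e.\ $\lambda>1$) I take $\epsilon_{1}=1$, $\epsilon_{2}=-1$, $\epsilon=-1$. Condition \eqref{f} of Theorem \ref{T2} is then furnished by $C_{1}=C>0$ together with the identity $\varepsilon C_{2}>0$ for $\tau>0$ derived just above. Condition \eqref{e} is supplied by the a priori bound \eqref{x}: in each regime setting $\chi(\tau)=\epsilon(a\tau+b\tau^{2})^{-1}$ makes $\epsilon\chi$ strictly decreasing, forces every zero of $\det Q_{\varepsilon,2}(\tau,\,\cdot\,)$ into the region $\epsilon(\varepsilon-\chi(\tau))>0$, and meets the prescribed endpoint limits at $\tau_{0}=0$.

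Theorem \ref{T2} then delivers, in each regime, $n$ ordered families of continuous curves $\tau\mapsto\varepsilon_{jk}(\tau)$ satisfying $\epsilon\dot{\varepsilon}_{jk}<0$. Because $\lambda=1-\varepsilon^{-1}$ is a strictly increasing bijection on each connected component of $\mathbb{R}\setminus\{0\}$, transporting produces $2n$ families $\lambda_{jk}(\tau)$, ordered as $\lambda_{j0}>\lambda_{j1}>\cdots>1$ in the regime $\lambda>1$ and as $\lambda_{j0}<\lambda_{j1}<\cdots<1$ in the regime $\lambda<1$, with $\dot{\lambda}_{jk}>0$ in the first case and $\dot{\lambda}_{jk}<0$ in the second (consistent with $\operatorname{sign}\dot{\lambda}=\operatorname{sign}\dot{\varepsilon}$). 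The existence of a $t_{0}\geq 0$ at which a curve reaches the threshold $\lambda=1$ corresponds to the infimum $(t_{jk},l_{jk})$ identified in the proof of Theorem \ref{T2}, translated back to $\lambda$-coordinates. Because only $n$ eigenvalue-angles $\varphi_{j}$ are available in each regime, each $\lambda\neq 1$ meets at most $n$ families, yielding $\dim\ker(\mathcal{L}(t)-\lambda I)\leq n$.

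For the concluding assertion I would apply the counting part of Theorem \ref{T2} at $l=1$ (so $\lambda=0$) in the regime $\varepsilon>0$: on one hand $\mu_{j}=\#\{k:\varepsilon_{jk}(t)<1\}$ enumerates exactly those curves with $\varepsilon_{jk}(t)\in(0,1)$, i.e.\ the negative eigenvalues of $\mathcal{L}(t)$ in family $j$; on the other hand $\mu=\sum_{j}\mu_{j}$ counts, with multiplicities, the singularities of $Q_{\varepsilon,2}(\tau,1)=f(1)\bigl(Q_{c}(\tau)+Q_{s}(\tau)N\bigr)=f(1)Q_{2}(\tau)$ for $\tau<t$, which is exactly the index from \eqref{J9}. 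The equivalence ``$Q_{2}(t)$ invertible $\iff\mathcal{L}(t)$ invertible'' is the same assertion read at $\varepsilon=1$. The step that I expect to require the most care is the verification of \eqref{e}: one must show that eigenvalue curves cannot accumulate onto the degenerate line $\varepsilon=0$ where the reparametrization collapses, and this is precisely what the estimate \eqref{x} provides, making it legitimate to treat the two sign regimes separately and then assemble their conclusions into the single global statement about $\mathcal{L}(t)$.
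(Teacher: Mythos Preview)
Your proposal is correct and follows exactly the route the paper sketches: the paper itself does not give a self-contained proof here but simply records that the result follows ``from this, from \eqref{x} and from Theorem~\ref{T2}'' (deferring full details to \cite{rezende4}), and your write-up fills in precisely those details---the reparametrization by $\varepsilon$, the identification of $\det Q_{\varepsilon,2}=0$ with the eigenvalue condition, the sign splitting $\epsilon_1=1$, $\epsilon_2=\operatorname{sign}\varepsilon$, the choice $\chi(\tau)=\epsilon(a\tau+b\tau^{2})^{-1}$ from \eqref{x}, and the counting at $\varepsilon=1$ to recover the Morse index \eqref{J9}. Nothing in your argument departs from the paper's intended line.
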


\subsection{Example}
\ \\
\noindent Let $A_{0}=( 1-\mu ) A_{3}+\mu A_{4}$, $B_{0}=( 1-\mu
) B_{3}+\mu B_{4}$, $C_{0}=( 1-\mu ) C_{3}+\mu C_{4}$.
Assume that $A_{3}$, $A_{4}$, $B_{3}$, $B_{4}$, $C_{3}$, $C_{4}$, $L_{1}$
and $L_{2}$ are $\mu $-independent and that $L_{1}$ and $L_{2}$ are
symplectic. We shall use $\mu $ instead of $\lambda $ as a parameter, and $%
( \cdot ) ^{\prime }\equiv \frac{\partial }{\partial \mu }(
\cdot ) $. Then 
\begin{equation*}
S_{0}^{\prime }=
\begin{bmatrix}
A_{4}-A_{3} & B_{4}-B_{3} \\ 
B_{4}^{\ast }-B_{3}^{\ast } & C_{4}-C_{3}
\end{bmatrix}
\equiv
\begin{bmatrix}
A & B \\ 
B^{\ast } & C
\end{bmatrix}
\text{.}
\end{equation*}

If 
\begin{equation*}
\begin{bmatrix}
X( \tau ,\sigma ) & Z( \tau ,\sigma ) \\ 
W( \tau ,\sigma ) & Y( \tau ,\sigma )
\end{bmatrix}
\equiv
\begin{bmatrix}
X & Z \\ 
W & Y
\end{bmatrix}
=L_1\Phi _0( \tau ) \Phi _0^{-1}( \sigma ) 
\text{,}
\end{equation*}
then 
\begin{equation*}
C_2=\int_0^\tau \bigl( XC( \sigma ) X^{*}+ZA( \sigma )
Z^{*}-XB( \sigma ) Z^{*}-ZB^{*}( \sigma ) X^{*}\bigr)
\,d\sigma \text{.}
\end{equation*}

Hence, if $JS_{0}^{\prime }J\leq 0$, $\varphi ( \tau ,\mu _{1})
\leq \varphi ( \tau ,\mu _{2}) $ for $\mu _{1}\leq \mu _{2}$ and
we have proved the following theorem:

\begin{theorem}
If $JS_0^{\prime }J\leq 0$ ($JS_0^{\prime }J\geq 0$), $\varphi \left( \tau
,\mu \right) $ is an increasing (decreasing) function of $\mu $ for every $%
\tau $. Moreover, if, for every $\tau $, there exists $\sigma <\tau $ such
that $\left( JS_0^{\prime }J\right) \left( \sigma \right) <0$ ($JS_0^{\prime
}J>0$), then $\varphi \left( \tau ,\mu \right) $ is a strictly increasing
(decreasing)\ function of $\mu $ for every $\tau >0$.
\end{theorem}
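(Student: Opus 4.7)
The plan is to combine the integral formula for $C_{2}$ established just above the theorem with the monotonicity mechanism behind Theorem~\ref{T2}.

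First I would rewrite the integrand of $C_{2}$ as a quadratic form in $-JS_{0}^{\prime }J$. With $S_{0}^{\prime }$ as given, a direct computation yields
\begin{equation*}
-JS_{0}^{\prime }J=\begin{bmatrix}C & -B\\ -B^{*} & A\end{bmatrix},
\end{equation*}
and one checks
\begin{equation*}
XCX^{*}+ZAZ^{*}-XBZ^{*}-ZB^{*}X^{*}=\begin{bmatrix}X & -Z\end{bmatrix}\bigl(-JS_{0}^{\prime }J\bigr)\begin{bmatrix}X^{*}\\ -Z^{*}\end{bmatrix}.
\end{equation*}
Hence the hypothesis $JS_{0}^{\prime }J\le 0$, equivalently $-JS_{0}^{\prime }J\ge 0$, makes the integrand positive semi-definite at every $\sigma$. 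Since a Riemann integral of a family of positive semi-definite symmetric matrices is again positive semi-definite, we obtain $C_{2}(\tau ,\mu )\ge 0$ for every admissible $\tau ,\mu$; the dual hypothesis $JS_{0}^{\prime }J\ge 0$ gives $C_{2}\le 0$.

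Second, I would convert this sign of $C_{2}$ into matrix monotonicity of $\varphi$ in $\mu$ by replaying the argument of Theorem~\ref{T2}. The polar representation $Q_{2}=r\cos\varphi$, $Q_{1}=r\sin\varphi$, together with the differential identity \eqref{k} applied now with $\mu$ in the role of the second parameter and $C_{2}$ in the role of $C$, translates $C_{2}\ge 0$ into $\partial\varphi /\partial\mu \ge 0$ as a symmetric matrix; integrating in $\mu$ then delivers $\varphi (\tau ,\mu _{1})\le \varphi (\tau ,\mu _{2})$ whenever $\mu _{1}\le \mu _{2}$. The opposite-sign hypothesis is handled identically. For the strict version, continuity of $S_{0}^{\prime }$ ensures that the assumption that for every $\tau$ some $\sigma <\tau$ has $(JS_{0}^{\prime }J)(\sigma )<0$ strictly forces the integrand in $C_{2}(\tau ,\mu )$ to be strictly positive definite on a whole neighbourhood of $\sigma$, so $C_{2}(\tau ,\mu )>0$ strictly; the same argument then upgrades $\partial\varphi /\partial\mu \ge 0$ to $\partial\varphi /\partial\mu >0$ and yields strict matrix monotonicity.

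The step I expect to be most delicate is the second one: showing that matrix-positivity of $C_{2}$ actually yields matrix-positivity of $\partial\varphi /\partial\mu$ rather than only positivity of the eigenvalues of $\varphi$ individually. The operator $\dot\varphi \mapsto (\sin\mathcal C_{\varphi }/\mathcal C_{\varphi })\dot\varphi$ appearing in \eqref{k} is not manifestly positivity-preserving when $\varphi$ has wide spectrum. The cleanest way to handle this is to exploit the non-uniqueness of $\varphi$, as in the reduction from Theorem~\ref{T1} to Theorem~\ref{T3}, and shift $\varphi$ by a large multiple of $I_{n}$ so that the commutator operator $\mathcal C_{\varphi }$ becomes small and $\sin\mathcal C_{\varphi }/\mathcal C_{\varphi }$ is a positive-definite perturbation of the identity; alternatively, one can work on intervals where $Q_{2}$ is invertible via the $\mu$-analogue $(Q_{2}^{-1}Q_{1})^{\prime }=Q_{2}^{-1}C_{2}Q_{2}^{*-1}$ of \eqref{o} and then patch across the singularities of $Q_{2}$ by continuity.
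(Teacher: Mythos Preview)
Your plan is exactly the paper's: the entire proof given there is the displayed formula for $C_{2}$ just before the theorem together with the observation that its integrand is, up to congruence, $-JS_{0}'J$; the sentence ``Hence, if $JS_{0}'J\le 0$, $\varphi(\tau,\mu_{1})\le\varphi(\tau,\mu_{2})$ for $\mu_{1}\le\mu_{2}$'' is all the argument the paper supplies. The passage from the sign of $C_{2}$ to monotonicity of $\varphi$ is left to the polar-representation machinery.

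On your ``delicate step'': the first proposed workaround fails, because shifting $\varphi$ by $kI_{n}$ leaves the commutator $\mathcal C_{\varphi}=[\varphi,\,\cdot\,]$ unchanged (\,$[\varphi+kI_{n},M]=[\varphi,M]$\,), so $\sin\mathcal C_{\varphi}/\mathcal C_{\varphi}$ is not altered at all; that route cannot work. Your second workaround, via the $\mu$-analogue of \eqref{o}, namely $(Q_{2}^{-1}Q_{1})'=Q_{2}^{-1}C_{2}Q_{2}^{*-1}$, is sound and is precisely how the paper argues in the parallel place in the proof of Theorem~\ref{T9}. Note also that if the conclusion is read as monotonicity of the \emph{eigenvalues} $\varphi_{j}$ of $\varphi$ --- which is what Theorem~\ref{T1} actually delivers and what the subsequent applications use --- then no operator-inversion issue arises: in an eigenbasis of $\varphi$ the diagonal of \eqref{k} reads $\varphi_{j}'=(r^{-1}C_{2}r^{*-1})_{jj}\ge 0$ directly.
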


Notice that if $L_{1}$ and $L_{2}$ are antisymplectic one has to reverse the
inequalities involving $JS_{0}^{\prime }J$\ in this theorem.

\section{Second remarkable case}

\noindent Let us take 
\begin{equation*}
\Phi =L_{1}\Phi _{0}^{\ast }L_{2}\text{,}
\end{equation*}
where 
\begin{equation*}
\dot{\Phi}_{0}=M_{0}\Phi _{0}\text{, \ }M_{0}=-JS_{0}\text{.}
\end{equation*}

$L_{1}$ and $L_{2}$ are both symplectic or both antisymplectic and
eigenvalue dependent: $L_{1}\equiv L_{1}\left( \lambda \right) $, $%
L_{2}\equiv L_{2}\left( \lambda \right) $. As before, $\Phi $, $\Phi _{0}$, $%
M_{0}$ and $S_{0}$ are both time and eigenvalue dependent: $\Phi \equiv \Phi
\left( \tau \right) \equiv \Phi \left( \tau ,\lambda \right) $, $\Phi
_{0}\equiv \Phi _{0}\left( \tau \right) \equiv \Phi _{0}\left( \tau ,\lambda
\right) $, $M_{0}\equiv M_{0}\left( \tau \right) \equiv M_{0}\left( \tau
,\lambda \right) $, $S_{0}\equiv S_{0}\left( \tau \right) \equiv S_{0}\left(
\tau ,\lambda \right) $ and so on.
\begin{equation*}
\begin{split}
M_{1} &=\dot{\Phi}\Phi ^{-1}=L_{1}\Phi _{0}^{\ast }M_{0}^{\ast }\Phi
_{0}^{\ast -1}L_{1}^{-1} \\
&=\Phi L_{2}^{-1}M_{0}^{\ast }L_{2}\Phi ^{-1}\text{.}\\
M_{2}&=\Phi ^{\prime }\Phi ^{-1}=\left( L_{1}^{\prime }\Phi _{0}^{\ast
}L_{2}+L_{1}\Phi _{0}^{\ast \prime }L_{2}+L_{1}\Phi _{0}^{\ast
}L_{2}^{\prime }\right) L_{2}^{-1}\Phi _{0}^{\ast -1}L_{1}^{-1}\text{.}\\
M_{2}&=\Phi ^{\prime }\Phi ^{-1}=L_{1}^{\prime }L_{1}^{-1}+L_{1}\Phi
_{0}^{\ast \prime }\Phi _{0}^{\ast -1}L_{1}^{-1}+L_{1}\Phi _{0}^{\ast
}L_{2}^{\prime }L_{2}^{-1}\Phi _{0}^{\ast -1}L_{1}^{-1}\text{.}
\end{split}
\end{equation*}

Notice that $\left( \Phi _{0}^{\ast \prime }\Phi _{0}^{\ast -1}\right)
^{\ast }=\Phi _{0}^{-1}\Phi _{0}^{\prime }$ is $K\equiv K\left( \tau
,\lambda \right) $, as defined in this section when we replace $\Phi $ by $%
\Phi _{0}$. In this situation, $K_{0}=0$ and $M_{1}$ is $M_{0}$.
\begin{equation*}
K\equiv K\left( \tau \right) =\int_{0}^{\tau _{{}}}\Phi _{0}^{-1}\left(
\sigma \right) M_{0}^{\prime }\left( \sigma \right) \Phi _{0}\left( \sigma
\right) d\sigma \text{.}
\end{equation*}
Then 
\begin{equation*}
\begin{split}
M_{2}\left( \tau \right) &= L_{1}^{\prime }L_{1}^{-1}+L_{1}\Phi _{0}^{\ast
}L_{2}^{\prime }L_{2}^{-1}\Phi _{0}^{\ast -1}L_{1}^{-1} \\
&\quad +L_{1}\left( \int_{0}^{\tau _{{}}}\Phi _{0}^{\ast }\left( \sigma \right)
M_{0}^{\ast \prime }\left( \sigma \right) \Phi _{0}^{\ast -1}\left( \sigma
\right) d\sigma \right) L_{1}^{-1}\text{.}\\[2pt]
M_{2}\left( \tau \right) &=L_{1}^{\prime }L_{1}^{-1}+\Phi
L_{2}^{-1}L_{2}^{\prime }\Phi ^{-1} \\
&+ L_{1}\left( \int_{0}^{\tau _{{}}}\Phi _{0}^{\ast }\left( \sigma \right)
M_{0}^{\ast \prime }\left( \sigma \right) \Phi _{0}^{\ast -1}\left( \sigma
\right) d\sigma \right) L_{1}^{-1}\text{.}
\end{split}
\end{equation*}

\begin{theorem}
\label{T6}Let $( L_2) _{22}=0$, $\det \bigl( ( L_2)_{12}\bigr)
\neq 0$, $Q_2( \tau ) =r( \tau ) \cos
\varphi ( \tau ) $ \ and \ $Q_1( \tau ) =r( \tau
) \sin \varphi ( \tau ) $. Denote $\varphi _1( \tau
) ,\ldots ,\varphi _n( \tau ) $ the eigenvalues of $\varphi
( \tau ) $. Then, if $C_0>0$ ($C_0<0$) and $\sin \varphi _j(
\tau _0) =0$, then $\varphi _j( \tau ) $ is decreasing
(increasing) in a neighborhood of $\tau _0$.
\end{theorem}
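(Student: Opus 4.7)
The plan is to read off $\dot\varphi_j(\tau_0)$ from equation \eqref{k}, which asserts $\frac{\sin\mathcal{C}_\varphi}{\mathcal{C}_\varphi}\dot\varphi=r^{-1}C_1r^{*-1}$ with $C_1=\dot Q_1Q_2^*-\dot Q_2Q_1^*$, and then to compute $C_1(\tau_0)$ explicitly from the factorization $\Phi=L_1\Phi_0^*L_2$. First I would diagonalize $\varphi(\tau_0)=\Omega D\Omega^*$, let $v_j$ be the unit eigenvector with $\varphi_j(\tau_0)=k\pi$, and set $u=r^{*-1}(\tau_0)v_j$. Since $\mathcal{C}_\varphi$ acts in the $\Omega$-basis on the $(j,k)$-entry by multiplication by $\varphi_j-\varphi_k$, the $(j,j)$-entry of $\frac{\sin\mathcal{C}_\varphi}{\mathcal{C}_\varphi}(\dot\varphi)$ is simply $(\dot\varphi)_{jj}=\dot\varphi_j$, and taking the $(j,j)$-entry of \eqref{k} at $\tau_0$ yields
\[
\dot\varphi_j(\tau_0)=\bigl(u,C_1(\tau_0)u\bigr).
\]

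Next I would compute $C_1$ explicitly. Writing $\Psi=L_1\Phi_0^*$ and $L_2=\bigl[\begin{smallmatrix}e&f\\g&0\end{smallmatrix}\bigr]$, the symplectic condition on $L_2$ forces $fe^*=ef^*$ and $g=-f^{*-1}$ (the antisymplectic case is parallel). Since $\dot\Phi_0^*=\Phi_0^*M_0^*$ and $M_0^*=S_0J$, reading off blocks gives $\dot\Psi_{11}=\Psi_{11}B_0^*+\Psi_{12}C_0$ and $\dot\Psi_{12}=-\Psi_{11}A_0-\Psi_{12}B_0$. Substituting $Q_1=\Psi_{11}f$ and $Q_2=\Psi_{11}e-\Psi_{12}f^{*-1}$ into $C_1=\dot Q_1Q_2^*-\dot Q_2Q_1^*$, all terms proportional to $fe^*-ef^*$ cancel, leaving the compact quadratic form
\[
C_1=-\begin{bmatrix}\Psi_{11}&\Psi_{12}\end{bmatrix}S_0\begin{bmatrix}\Psi_{11}^*\\ \Psi_{12}^*\end{bmatrix}.
\]

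Finally I would evaluate $(u,C_1u)$ at $\tau_0$ by computing $\Psi_{11}^*u$ and $\Psi_{12}^*u$. From $\Psi_{11}f=r\sin\varphi$ one gets $\Psi_{11}^*=f^{*-1}\sin\varphi\,r^*$, hence $\Psi_{11}^*(\tau_0)u=f^{*-1}\sin\varphi(\tau_0)v_j=0$. Solving the $Q_2$ identity for $\Psi_{12}$ and using $f^{-1}ef^*=e^*$ (a rearrangement of $fe^*=ef^*$) yields $\Psi_{12}^*=e\sin\varphi\,r^*-f\cos\varphi\,r^*$, hence $\Psi_{12}^*(\tau_0)u=-\cos\varphi_j(\tau_0)\,fv_j=\mp fv_j$. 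Only the $C_0$-term survives in the quadratic form, and
\[
(u,C_1u)=-\bigl(fv_j,\,C_0(\tau_0)\,fv_j\bigr),
\]
which is strictly negative when $C_0>0$ (since $\det f\neq 0$ and $v_j\neq 0$). Therefore $\dot\varphi_j(\tau_0)<0$, and by continuity $\varphi_j$ is strictly decreasing in a neighborhood of $\tau_0$; the $C_0<0$ case is the mirror image. The main obstacle is the algebraic bookkeeping in the second step: the cancellation of the $B_0$-mediated cross terms relies crucially on the identity $fe^*=ef^*$ supplied by the symplecticity of $L_2$, and the reductions $\Psi_{11}^*u=0$ and $\Psi_{12}^*u=\mp fv_j$ likewise depend on combining the polar identities $Q_i=r(\cos\varphi,\sin\varphi)$ with this same symplectic identity.
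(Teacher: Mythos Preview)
Your proof is correct and follows essentially the same route as the paper: compute $C_1$ from the factorization $\Phi=L_1\Phi_0^*L_2$, then read off the sign of $\dot\varphi_j(\tau_0)$ from equation~\eqref{k}. The paper's bookkeeping differs only cosmetically---it writes $C_1$ as a quadratic form in $Q_1,Q_2$ with auxiliary coefficient matrices $A_3,B_3,C_3$ built from $L_2$ and $S_0$, and it block-diagonalizes $\varphi$ near $\tau_0$ via a time-dependent orthogonal $U(\tau)$ rather than diagonalizing only at $\tau_0$---but both arguments terminate at the same inequality $-(fv_j,\,C_0(\tau_0)\,fv_j)<0$.

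One small caveat: your identification $v_j^*\dot\varphi(\tau_0)v_j=\dot\varphi_j(\tau_0)$ is the Hellmann--Feynman formula and, as stated, presumes $\varphi_j(\tau_0)$ is a simple eigenvalue. When the $k\pi$-eigenspace has dimension $>1$, the derivatives of the eigenvalue branches are the eigenvalues of the \emph{block} $\bigl(v_l^*\dot\varphi(\tau_0)v_m\bigr)_{l,m}$; your computation applied to any $v$ in that eigenspace still gives $\Psi_{11}^*r^{*-1}v=0$ and hence shows this block equals (a congruence of) $-f^*C_0f$, which is negative definite, so the conclusion survives unchanged. The paper's time-dependent block-diagonalization is precisely its device for handling this multiplicity issue.
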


\begin{proof}
Denote 
\begin{equation*}
\begin{split}
C_3 &= -( L_2)_{12}^{*}C_0( L_2)_{12}, \\
B_3 &= -( L_2)_{12}^{*}C_0( L_2)_{11}+(L_2)_{12}^{*}B_0( L_2)_{21}, \\
A_3 &= -( L_2)_{11}^{*}C_0( L_2)_{11}-(L_2)_{21}^{*}A_0( L_2)_{21} \\
&\quad +( L_2)_{11}^{*}B_0( L_2)_{21}+( L_2)_{21}^{*}B_0^{*}( L_2)_{11}.
\end{split}
\end{equation*}
Then 
\begin{equation*}
C_1=Q_2C_3Q_2^{*}-Q_2B_3Q_1^{*}-Q_1B_3^{*}Q_2^{*}+Q_1A_3Q_1^{*}.
\end{equation*}

Let $U\equiv U( \tau ) $ a $C^1$ orthogonal matrix defined in a
neighborhood of $\tau _0$ and $\Phi =U^{*}\varphi U$. Then, as, for $k\geq 1$, 
\begin{equation*}
\mathcal{C}_\varphi ^k\dot{\varphi}=
U\bigl( -\mathcal{C}_\Phi ^{k+1}(
U^{*}\dot{U}) +\mathcal{C}_\Phi ^k\dot{\Phi}\bigr) U^{*}\text{,}
\end{equation*}
from formula \eqref{k}, one has 
\begin{equation*}
\frac{\sin \mathcal{C}_\Phi }{\mathcal{C}_\Phi }\dot{\Phi}-( \sin 
\mathcal{C}_\Phi ) ( U^{*}\dot{U}) =U^{*}r^{-1}C_1r^{*-1}U\text{.}
\end{equation*}

One can choose $U$ such that $\Phi ( \tau _0) $ is diagonal and $%
\Phi =\diag( \Phi _1,\linebreak[0]\Phi _2) $, with $\sin \Phi _1(
\tau _0) \neq 0$, $\sin \Phi _2( \tau _0) =0$.

Then, one obtains: 
\begin{equation*}
\left( \frac{\sin \mathcal{C}_\Phi }{\mathcal{C}_\Phi }\dot{\Phi}\right)_{22}
=\dot{\Phi}_2\text{,\quad}\bigl(( \sin \mathcal{C}_\Phi )
( U^{*}\dot{U}) \bigr)_{22}\left( \tau _0\right) =0\text{,}
\end{equation*}
and 
\begin{equation*}
\begin{split}
U^{*}r^{-1}C_1r^{*-1}U &=\cos \Phi UC_3U^{*}\cos \Phi -\cos \Phi
UB_3U^{*}\sin \Phi \\
&\quad -\sin \Phi UB_3^{*}U^{*}\cos \Phi +\sin \Phi UA_3U^{*}\sin \Phi \text{.}
\end{split}
\end{equation*}
Hence 
\begin{equation*}
\left( U^{*}r^{-1}C_1r^{*-1}U\right) _{22}\left( \tau _0\right) =\left( \cos
\Phi UC_3U^{*}\cos \Phi \right) _{22}\left( \tau _0\right) <0.
\end{equation*}
and
\begin{equation*}
\dot{\Phi}_2\left( \tau _0\right) =\left( \cos \Phi UC_3U^{*}\cos \Phi
\right) _{22}\left( \tau _0\right) <0.
\end{equation*}

Then $\dot{\Phi}_2\left( \tau \right) <0$ in a neighborhood of $\tau _0$ and
the theorem follows.
\end{proof}

Similarly one can prove the following theorem:

\begin{theorem}
\label{T7}Let $( L_2) _{21}=0$, $\det ( ( L_2)
_{11}) \neq 0$, $Q_2( \tau ) =r( \tau ) \cos
\varphi ( \tau ) $ \ and \ $Q_1( \tau ) =r( \tau
) \sin \varphi ( \tau ) $. Denote $\varphi _1( \tau
) ,\ldots ,\varphi _n( \tau ) $ the eigenvalues of $\varphi
( \tau ) $. Then, if $C_0>0$ ($C_0<0$) and $\cos \varphi _j(
\tau _0) =0$, then $\varphi _j( \tau ) $ is decreasing
(increasing) in a neighborhood of $\tau _0$.
\end{theorem}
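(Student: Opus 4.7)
The plan is to mirror the proof of Theorem~\ref{T6}, under the dictionary $\sin\leftrightarrow\cos$ and $(L_2)_{22}=0\leftrightarrow (L_2)_{21}=0$, with $A_3$ now playing the role that $C_3$ played there. Concretely, I would first recompute the matrices $A_3,B_3,C_3$ from the general expression $L_2^{\ast}JM_0^{\ast}L_2$ that was used implicitly in Theorem~\ref{T6}, setting $(L_2)_{21}=0$ in place of $(L_2)_{22}=0$. This drops the opposite set of terms and leaves
\[
A_3=-(L_2)_{11}^{\ast}C_0(L_2)_{11},
\]
together with analogous expressions for $B_3$ and $C_3$ (respectively linear and quadratic in $(L_2)_{22}$). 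The identity
\[
C_1=Q_2C_3Q_2^{\ast}-Q_2B_3Q_1^{\ast}-Q_1B_3^{\ast}Q_2^{\ast}+Q_1A_3Q_1^{\ast}
\]
is a purely algebraic consequence of $S_1=\Phi^{\ast-1}(L_2^{\ast}JM_0^{\ast}L_2)\Phi^{-1}$ and is therefore unchanged. The key sign, which was $C_3<0$ in Theorem~\ref{T6}, is now $A_3<0$, coming from $\det((L_2)_{11})\neq 0$ together with $C_0>0$.

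Next, I would introduce a $C^1$ orthogonal $U(\tau)$ and set $\Phi=U^{\ast}\varphi U$ diagonal at $\tau_0$, with the $\Phi_2$ block one-dimensional and corresponding to the single eigenvalue $\varphi_j$, so that $\cos\Phi_2(\tau_0)=0$ and $\sin\Phi_2(\tau_0)=\pm 1$. Transporting equation~\eqref{k} into the $U$-basis produces the same identity as in the proof of Theorem~\ref{T6}, with the present $A_3,B_3,C_3$ substituted on the right. Taking the $(2,2)$-block at $\tau_0$, each of the three terms involving a factor of $\cos\Phi$ carries a factor of $\cos\Phi_2(\tau_0)=0$ and so vanishes, while the surviving $\sin\Phi\,UA_3U^{\ast}\sin\Phi$ term reduces to $(\sin\Phi_2)^2(UA_3U^{\ast})_{22}(\tau_0)=(UA_3U^{\ast})_{22}(\tau_0)<0$ by the sign of $A_3$. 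On the left, because $\Phi_2$ is a scalar on its block, every iterated commutator with $\Phi$ annihilates the $(2,2)$-entry at $\tau_0$, exactly as in Theorem~\ref{T6}, so the left side reduces to $\dot\Phi_2(\tau_0)$. Hence $\dot\Phi_2(\tau_0)<0$ and continuity yields $\dot\varphi_j<0$ in a neighborhood of $\tau_0$; the case $C_0<0$ follows by reversing signs.

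The only step that requires real care is the algebraic bookkeeping in the first paragraph: checking that with $(L_2)_{21}=0$ the $C_1$ identity continues to hold in the stated form and that $A_3$ inherits the sign of $-C_0$. Once that is in hand, the subsequent $(2,2)$-block analysis is a literal variant of the one carried out for Theorem~\ref{T6}, with the roles of cosine and sine exchanged in the evaluation at $\tau_0$, and presents no new difficulty.
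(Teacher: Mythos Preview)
Your proposal is correct and follows exactly the route the paper intends: the paper proves Theorem~\ref{T6} in detail and then states that Theorem~\ref{T7} is proved ``similarly,'' i.e., by the $\sin\leftrightarrow\cos$, $(L_2)_{22}=0\leftrightarrow(L_2)_{21}=0$ swap you describe. Your identification of the key point---that under $(L_2)_{21}=0$ the general identity $C_1=Q_2C_3Q_2^{\ast}-Q_2B_3Q_1^{\ast}-Q_1B_3^{\ast}Q_2^{\ast}+Q_1A_3Q_1^{\ast}$ still holds with now $A_3=-(L_2)_{11}^{\ast}C_0(L_2)_{11}<0$, so that at $\cos\Phi_2(\tau_0)=0$ the surviving $(2,2)$-term is $(\sin\Phi\,U^{\ast}A_3U\,\sin\Phi)_{22}(\tau_0)<0$---is precisely the intended modification.
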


\subsection{Example}
\ \\
\noindent
Let $A_{0}=( 1-\mu ) A_{3}+\mu A_{4}$, $B_{0}=( 1-\mu
) B_{3}+\mu B_{4}$, $C_{0}=( 1-\mu ) C_{3}+\mu C_{4}$.
Assume that $A_{3}$, $A_{4}$, $B_{3}$, $B_{4}$, $C_{3}$ and $C_{4}$ are $\mu 
$-independent. We shall use $\mu $ instead of $\lambda $ as a parameter, and 
$( \cdot ) ^{\prime }\equiv \frac{\partial }{\partial \mu }(
\cdot ) $. Then 
\begin{equation*}
S_{0}^{\prime }=
\begin{bmatrix}
A_{4}-A_{3} & B_{4}-B_{3} \\ 
B_{4}^{\ast }-B_{3}^{\ast } & C_{4}-C_{3}
\end{bmatrix}
\equiv
\begin{bmatrix}
A & B \\ 
B^{\ast } & C
\end{bmatrix}
\text{.}
\end{equation*}

Define
\begin{equation*}
L_{1}=
\begin{bmatrix}
\alpha _{0} & -\beta _{0} \\ 
\beta _{0} & \alpha _{0}
\end{bmatrix}
\text{,\quad} L_{2}=
\begin{bmatrix}
( 1-\mu ) \delta _{3}+\mu \delta _{4} & -I_{n} \\ 
I_{n} & 0
\end{bmatrix}
\end{equation*}
with $( \alpha _{0}\alpha _{0}^{\ast }+\beta _{0}\beta _{0}^{\ast
}) ^{-1/2}=I_{n}$, $\alpha _{0}\beta _{0}^{\ast }=\beta _{0}\alpha
_{0}^{\ast }$, $\delta _{3}=\delta _{3}^{\ast }$ and $\delta _{4}=\delta
_{4}^{\ast }$.

If 
\begin{equation*}
\begin{bmatrix}
X( \tau ) & Z( \tau ) \\ 
W( \tau ) & Y( \tau )
\end{bmatrix}
\equiv
\begin{bmatrix}
X & Z \\ 
W & Y
\end{bmatrix}
= L_{1}\Phi _{0}^{\ast }( \tau ) \text{,}
\end{equation*}
then
\begin{multline}\label{p}
C_{2} = Q_{1}( \delta _{4}-\delta _{3}) Q_{1}^{\ast }   \\
\quad -\int_{0}^{\tau }\!\!( ZC( \sigma ) Z^{\ast }+XA( \sigma
) X^{\ast }+XB^{\ast }( \sigma ) Z^{\ast }+ZB( \sigma
) X^{\ast }) \,d\sigma \text{.}\!\!\!\!
\end{multline}

Hence, if $S_{0}^{\prime }\leq 0$ and $\delta _{4}-\delta _{3}\geq 0$, $%
\varphi ( \tau ,\mu _{1}) \leq \varphi ( \tau ,\mu
_{2}) $ for $\mu _{1}\leq \mu _{2}$ and we have proved the following
theorem:

\begin{theorem}
\label{T8}If $S_0^{\prime }\leq 0$ and $\delta _4-\delta _3\geq 0$, $\varphi
( \tau ,\mu ) $ is an increasing function of $\mu $ for every $%
\tau $. Moreover, if $\delta _4-\delta _3>0$ or, for every $\tau $, there
exists $\sigma <\tau $ such that $( S_0^{\prime }) ( \sigma
) <0$, then $\varphi ( \tau ,\mu ) $ is a strictly
increasing function of $\mu $ for every $\tau >0$.
\end{theorem}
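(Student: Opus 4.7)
The strategy is to apply the polar-representation machinery with the parameter $\mu$ playing the role of the time variable $\tau$, so that the sign of the eigenvalue derivatives $\partial_\mu \varphi_j$ is governed by the $(2,2)$-block $C_2$ of $-JM_2 = -J\Phi'\Phi^{-1}$. The $\mu$-analog of equation \eqref{k} reads
\begin{equation*}
\frac{\sin \mathcal{C}_\varphi}{\mathcal{C}_\varphi}\, \varphi' = r^{-1}\, C_2\, r^{*-1},
\end{equation*}
and, exactly as in the proof of Theorems \ref{T1}, \ref{T6} and \ref{T7} (diagonalise $\varphi$ pointwise by an orthogonal conjugation, so that on the diagonal part $\mathcal{C}_\varphi$ vanishes and $\partial_\mu \varphi_j$ coincides with the diagonal entries of $r^{-1}C_2 r^{*-1}$), the formula forces $\partial_\mu \varphi_j \geq 0$ (resp.\ $>0$) wherever $C_2 \geq 0$ (resp.\ $>0$). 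The proof therefore reduces to controlling the sign of $C_2$ via formula \eqref{p}.

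The first step is to recognise that, up to the usual symmetry of the $(1,2)$- and $(2,1)$-blocks of $S_0'$, the integrand in \eqref{p} is the quadratic form $[\,X\ Z\,]\, S_0'(\sigma)\,[\,X^{*}\ Z^{*}\,]^{*}$, so that
\begin{equation*}
C_2(\tau,\mu) = Q_1 (\delta_4 - \delta_3) Q_1^{*} - \int_0^\tau \begin{bmatrix} X & Z \end{bmatrix} S_0'(\sigma) \begin{bmatrix} X^{*} \\ Z^{*} \end{bmatrix} d\sigma .
\end{equation*}
Under $S_0' \leq 0$ the integrand is pointwise $\leq 0$, and under $\delta_4 - \delta_3 \geq 0$ the boundary term is $\geq 0$; hence $C_2 \geq 0$. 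Combining with the preceding paragraph yields $\partial_\mu \varphi_j(\tau,\mu) \geq 0$, which is the first (non-strict) assertion.

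For the strict part, I would treat the two alternative hypotheses separately. If $\delta_4 - \delta_3 > 0$, then $Q_1(\delta_4-\delta_3)Q_1^{*}$ is positive definite wherever $Q_1$ has full rank; since, as in the proof of Theorem \ref{T3}, $Q_1Q_1^{*}+Q_2Q_2^{*}>0$ and the rank of $Q_1$ can drop only on a sparse set in $\tau$, one obtains $C_2(\tau,\mu)>0$ for all $\tau>0$ outside a nowhere-dense set, which suffices for strict monotonicity of each $\varphi_j$ in $\mu$. If instead there exists $\sigma<\tau$ with $S_0'(\sigma)<0$, then by continuity a whole subinterval contributes strictly to $-C_2$, yielding $C_2(\tau,\mu)>0$ directly. \textbf{The main obstacle} is precisely this passage from pointwise non-negativity of $C_2$ to genuine (strict) monotonicity in $\mu$: Theorem \ref{T1} in its stated form requires a \emph{strict} sign for the relevant $C$-matrix, so for the non-strict version one has to run an approximation argument in the spirit of the proof of Theorem \ref{T3} (perturb $C_2$ by $k(Q_2Q_2^{*}+Q_1Q_1^{*})$, apply Theorem \ref{T1}, and let $k\to 0$), and then show that the strict positivity provided by either alternative hypothesis is preserved in the limit.
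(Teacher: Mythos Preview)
Your approach is essentially the paper's: read off $C_2\geq 0$ (resp.\ $>0$) from formula \eqref{p} under the stated hypotheses, and conclude monotonicity of the eigenvalues of $\varphi$ in $\mu$ via the $\mu$-analogue of \eqref{k}. The paper's own argument is the single sentence preceding the theorem statement; your additional care about the strict/non-strict distinction and the Theorem~\ref{T3}-style perturbation is more explicit than what the paper writes out, but follows the same line.
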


\subsection{Example: the Sturm-Liouville problem}
\ \\
\noindent
Consider the Sturm-Liouville equation
\begin{equation}
\left( C_{0}^{-1}\dot{q}\right) ^{\cdot }+\left( -D+\lambda E\right) q=0%
\text{,}  \label{r}
\end{equation}
subject to the separated end conditions
\begin{equation}\label{s}
\begin{aligned}
\beta _{0}q\left( 0\right) +\alpha _{0}\left( C_{0}^{-1}\dot{q}\right)
\left( 0\right) &=0\\
\delta _{1}q\left( t\right) +\gamma _{1}\left( C_{0}^{-1}\dot{q}\right)
\left( t\right) &=0.
\end{aligned}
\end{equation}

In this case $A_{0}=-D+\lambda E$, $B_{0}=0$; $C_{0}$, $D$ and $E$ are $\tau 
$ dependent and $\lambda $ independent; $C_{0},E>0$. The matrices $\alpha
_{0}$, $\beta _{0}$, $\gamma _{1}$, $\delta _{1}$ are $\lambda $
independent. In this case $\beta _{1}=\alpha _{1}=\delta _{0}=\gamma _{0}=0$%
One also has 
\begin{equation*}
\alpha _{0}\beta _{0}^{\ast }=\beta _{0}\alpha _{0}^{\ast }\text{,\quad}%
\gamma _{1}\delta _{1}^{\ast }=\delta _{1}\gamma _{1}^{\ast }\text{.}
\end{equation*}

Assume also that $\alpha _{0}\alpha _{0}^{\ast }+\beta _{0}\beta _{0}^{\ast
}>0$, $\det \gamma _{1}\neq 0$. It is clear that one can replace $\delta
_{1} $ by $\gamma _{1}^{-1}\delta _{1}\equiv \delta $ (a symmetric matrix)
and$\ \gamma _{1}$ by $I_{n}$. One can also replace $\alpha _{0}$ by $\left(
\alpha _{0}\alpha _{0}^{\ast }+\beta _{0}\beta _{0}^{\ast }\right)
^{-1/2}\alpha _{0}$ and $\beta _{0}$ by $\left( \alpha _{0}\alpha _{0}^{\ast
}+\beta _{0}\beta _{0}^{\ast }\right) ^{-1/2}\beta _{0}$ and have $\alpha
_{0}\alpha _{0}^{\ast }+\beta _{0}\beta _{0}^{\ast }=I_{n}$, as we shall
assume from now on. Then condition \eqref{a} is 
\begin{equation*}
\det \left(
\begin{bmatrix}
\beta _{0} & \alpha _{0} \\ 
0 & 0
\end{bmatrix}
-
\begin{bmatrix}
0 & 0 \\ 
\delta & I_{n}
\end{bmatrix}
\begin{bmatrix}
Q_{c}\left( t\right) & Q_{s}\left( t\right) \\ 
P_{c}\left( t\right) & P_{s}\left( t\right)
\end{bmatrix}
\right) =0\text{.}
\end{equation*}

Defining 
\begin{align*}
Q_{2} &= \left( \alpha _{0}Q_{c}^{\ast }-\beta _{0}Q_{s}^{\ast }\right)
\delta +\alpha _{0}P_{c}^{\ast }-\beta _{0}P_{s}^{\ast }\text{,} \\
Q_{1} &= -\alpha _{0}Q_{c}^{\ast }+\beta _{0}Q_{s}^{\ast }\text{,}
\end{align*}
one has that condition \eqref{a} is $\det Q_{2}\left( t\right) =0$.

From now on we shall use the notation 
\begin{equation*}
Q_{1}=r( \tau ,\lambda ) \sin \varphi ( \tau ,\lambda
) \text{,\quad}Q_{2}=r( \tau ,\lambda ) \cos \varphi
( \tau ,\lambda ) \text{.}
\end{equation*}

Notice that the continuity condition on $\varphi \left( \tau ,\lambda
\right) $ implies that $\lambda \mapsto \varphi \left( 0,\lambda \right) $
is constant.

We define $\Phi =L_{1}\Phi _{0}^{\ast }L_{2}$, $\Phi $ as in formula
\eqref{n} and
\begin{equation*}
L_{1}=
\begin{bmatrix}
\alpha _{0} & -\beta _{0} \\ 
\beta _{0} & \alpha _{0}
\end{bmatrix}
\text{,\quad}
L_{2}=
\begin{bmatrix}
\delta & -I_{n} \\ 
I_{n} & 0
\end{bmatrix}
\text{.}
\end{equation*}
Then, if 
\begin{equation*}
\begin{bmatrix}
X( \tau ) & Z( \tau ) \\ 
W( \tau ) & Y( \tau )
\end{bmatrix}
\equiv
\begin{bmatrix}
X & Z \\ 
W & Y
\end{bmatrix}
 =L_{1}\Phi _{0}^{\ast }( \tau ) \text{,}
\end{equation*}
we have
\begin{align*}
X &\equiv X( \tau ) =\alpha _{0}Q_{c}^{\ast }( \tau )
-\beta _{0}Q_{s}^{\ast }( \tau ) =-Q_{1} \\
Z &\equiv Z( \tau ) =\alpha _{0}P_{c}^{\ast }( \tau )
-\beta _{0}P_{s}^{\ast }( \tau )\\
C_{1}&=-ZC_{0}Z^{\ast }-XA_{0}X^{\ast }\\
M_{2}&=\int_{0}^{\tau _{{}}}
\begin{bmatrix}
X & Z \\ 
W & Y
\end{bmatrix}
\begin{bmatrix}
0 & -E \\ 
0 & 0
\end{bmatrix}
\begin{bmatrix}
Y^{\ast } & -Z^{\ast } \\ 
-W^{\ast } & X^{\ast }
\end{bmatrix}
d\sigma
\end{align*}
\begin{equation}
C_{2}=-\int_{0}^{\tau _{{}}}X( \sigma ) \, E( \sigma ) \,
X^{\ast }( \sigma ) \, d\sigma \text{.}  \label{q}
\end{equation}
We remark that $C_{2}<0$, for $\tau \in \left] 0,t\right] $.

\begin{lemma}
Consider the simpler case where $C_0=cI_n$, $D=dI_n$, $E=eI_n$, $\delta
=\theta I_n$\ with $c,d,e,\theta \in \mathbb{R}$, $c,e>0$. Then, there exists a
symmetric matrix $\varphi ^{-}$\ such that, for every $\tau \in \left]
0,t\right] $, 
\begin{equation*}
\lim_{\lambda \rightarrow +\infty }\varphi ( \tau ,\lambda )
=-\infty \text{,\quad }\lim_{\lambda \rightarrow -\infty }\varphi (
\tau ,\lambda ) =\varphi ^{-}\text{,}
\end{equation*}
where $\tan \varphi ^{-}=0$. Moreover, $\varphi ^{-}$ is constant for $\tau
\in \left] 0,t\right] $.
\end{lemma}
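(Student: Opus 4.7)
The plan is to exploit the fact that in this special case the Hamiltonian system decouples into $n$ identical scalar systems, since $C_0=cI_n$, $A_0=(-d+\lambda e)I_n$ and $B_0=0$ are all scalar multiples of the identity. The fundamental solutions are therefore scalar multiples of $I_n$: $Q_c=q_c(\tau,\lambda)I_n$, $Q_s=q_sI_n$, $P_c=p_cI_n$, $P_s=p_sI_n$, with $q_c,q_s,p_c,p_s$ given explicitly by $\cos,\sin$ (with frequency $\omega=\sqrt{c(\lambda e-d)}$) when $\lambda>d/e$, and by $\cosh,\sinh$ (with rate $\kappa=\sqrt{c(d-\lambda e)}$) when $\lambda<d/e$. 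Substituting into the formulas for $Q_1,Q_2$ from the Sturm--Liouville subsection yields $Q_1=-\alpha_0 q_c+\beta_0 q_s$ and $Q_2=\alpha_0(\theta q_c+p_c)-\beta_0(\theta q_s+p_s)$.

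For $\lambda\to-\infty$, I would factor $e^{\kappa\tau}/2$ from the hyperbolic asymptotics to obtain, for $\tau>0$ fixed and $\kappa$ large,
\[ Q_1\sim -\alpha_0\,\frac{e^{\kappa\tau}}{2},\qquad Q_2\sim\frac{\kappa}{c}\alpha_0\,\frac{e^{\kappa\tau}}{2}. \]
When $\alpha_0$ is invertible this gives $Z:=Q_2^{-1}Q_1\sim -(c/\kappa)I_n\to 0$; in general one decomposes $\mathbb{R}^n$ relative to $\ker\alpha_0$, on which $\beta_0$ is necessarily invertible by $\alpha_0\alpha_0^{*}+\beta_0\beta_0^{*}=I_n$, and argues block by block to reach $Z\to 0$ as a symmetric matrix. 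Since $Z=\tan\varphi$ and the eigenvalues of $\varphi(\tau,\lambda)$ are strictly decreasing in $\lambda$ (from $C_2<0$ in formula~\eqref{q}), each eigenvalue $\varphi_j(\tau,\lambda)$ converges to a finite integer multiple of $\pi$ as $\lambda\to-\infty$; infinite limits are excluded because $\tan\varphi_j\to 0$ forbids oscillation.

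To prove constancy of $\varphi^{-}$ in $\tau$, an explicit computation with $X=\alpha_0 q_c-\beta_0 q_s$, $Z=\alpha_0 p_c-\beta_0 p_s$ gives
\[ C_1=-cZZ^{*}+(d-\lambda e)XX^{*}=\frac{1}{c}\bigl(\kappa^2\alpha_0\alpha_0^{*}-c^2\beta_0\beta_0^{*}\bigr), \]
which is remarkably $\tau$-independent. For $0<\tau_0<\tau_1\le t$ and $\kappa$ large enough that $Q_2(\sigma,\lambda)$ is invertible on $[\tau_0,\tau_1]$ (the unique possible zero of $\det Q_2$ in the hyperbolic regime tends to $0$ as $\kappa\to\infty$), formula~\eqref{o} applied to the transformed system yields
\[ Z(\tau_1,\lambda)-Z(\tau_0,\lambda)=\int_{\tau_0}^{\tau_1} Q_2^{-1}C_1 Q_2^{*-1}\,d\sigma, \]
and the integrand is asymptotic to $4c\,e^{-2\kappa\sigma}I_n$ (for invertible $\alpha_0$, with analogous bounds in the general case), so its integral tends to $0$. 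Hence $\varphi(\tau_1,\lambda)-\varphi(\tau_0,\lambda)\to 0$ as symmetric matrices, so the limit is independent of $\tau$.

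For $\lambda\to+\infty$, the explicit oscillatory expressions show that $\det Q_2(\tau,\lambda)=0$ has solutions $\lambda_k\to+\infty$ for any fixed $\tau\in(0,t]$, because the coefficients in $Q_2$ involve $\cos(\omega\tau),\sin(\omega\tau)$ with $\omega\to\infty$; by Theorem~\ref{T2} and the strict monotone decrease of $\varphi$ in $\lambda$, each eigenvalue $\varphi_j(\tau,\lambda)$ must pass through $\pi/2+k\pi$ for $k$ descending to $-\infty$, forcing $\varphi_j\to-\infty$. The principal obstacle is the matrix case when $\alpha_0$ is singular: the dominant asymptotic terms in $Q_2$ then shift from the $(\kappa/c)\alpha_0$ contribution to the $\beta_0$ contribution on $\ker\alpha_0$, and the block-wise verification of both $Z\to 0$ and the decay of $\int Q_2^{-1}C_1Q_2^{*-1}\,d\sigma$ requires some care in this degenerate situation.
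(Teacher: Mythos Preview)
Your approach for $\lambda\to-\infty$ is essentially correct and parallels the paper's, though you use direct hyperbolic asymptotics where the paper introduces the matrix polar decomposition $\alpha_0=\varrho\cos\eta$, $\beta_0=\varrho\sin\eta$ to obtain the explicit bound $\|Q_2^{-1}Q_1\|\le\bigl(-|\theta|+c^{-1}\omega\tanh\omega\tau\bigr)^{-1}$. Your observation that $C_1=c^{-1}(\kappa^{2}\alpha_0\alpha_0^{*}-c^{2}\beta_0\beta_0^{*})$ is $\tau$-independent is correct and pleasant, but for the constancy of $\varphi^{-}$ it is not actually needed: once you know $Q_2$ is invertible on $[\tau_0,\tau_1]$ for $\lambda$ negative enough, each $\varphi_j$ stays in a single interval $\bigl(\tfrac{\pi}{2}+k\pi,\tfrac{\pi}{2}+(k+1)\pi\bigr)$ throughout $[\tau_0,\tau_1]$, and since $\tan\varphi_j\to 0$ at both endpoints the limits must be the unique integer multiple of $\pi$ in that interval. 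The paper draws exactly this inference from the uniform convergence of $Q_2^{-1}Q_1\to 0$ on $[\tau_*,t]$; your integral estimate via formula~\eqref{o} is redundant for this step, and the jump from ``$Z(\tau_1)-Z(\tau_0)\to 0$'' to ``$\varphi(\tau_1)-\varphi(\tau_0)\to 0$'' is not valid as written (tangent is not globally injective).

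For $\lambda\to+\infty$ your argument has a genuine gap. Knowing that $\det Q_2(\tau,\lambda)=0$ for infinitely many $\lambda\to+\infty$ only tells you that \emph{some} $\varphi_j$ hits $\tfrac{\pi}{2}+k\pi$ infinitely often; by pigeonhole this forces at least one $\varphi_j\to-\infty$, not all $n$ of them. (Also, Theorem~\ref{T2} carries hypotheses~\eqref{e}--\eqref{f} that you have not verified, and in any case it is not what the argument requires.) The paper avoids this by again using a matrix polar decomposition, now $\alpha_0=\rho\cos\psi$, $c\omega^{-1}\beta_0=\rho\sin\psi$ with $\psi$ symmetric, together with the scalar function $\zeta$ from Example~1.1, to obtain the explicit formula
\[
\varphi=\zeta\bigl(-\theta,\,c\omega^{-1},\,\omega\tau I_n+\psi\bigr).
\]
Since $\zeta(k_1,k_2,\sigma)\to-\infty$ as $\sigma\to+\infty$ for $k_2>0$, every eigenvalue of $\varphi$ tends to $-\infty$. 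To repair your route you would have to exploit the decoupling in the eigenbasis of $\psi$ (equivalently, a simultaneous reduction of $\alpha_0,\beta_0$) and run the oscillation count eigenvalue by eigenvalue --- which is precisely what the paper's $\zeta$-formula packages in one line.
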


\begin{proof}
Consider first $\lambda >d/e$. Define $\omega =\sqrt{c\left( -d+\lambda
e\right) }$. Then 
\begin{align*}
Q_2 &=\theta \left( \left( \cos \omega \tau \right) \alpha _0-c\omega
^{-1}\left( \sin \omega \tau \right) \beta _0\right) \\
&\quad-\left( \cos \omega \tau \right) \beta _0-c^{-1}\omega \left( \sin \omega
\tau \right) \alpha _0\text{,} \\
Q_1 &=-\left( \cos \omega \tau \right) \alpha _0+c\omega ^{-1}\left( \sin
\omega \tau \right) \beta _0\text{.}
\end{align*}

Defining $\psi $ and $\rho $, $\det \rho \neq 0$, such that 
\begin{equation*}
\alpha _0=\rho \cos \psi \text{,\quad}c\omega ^{-1}\beta _0=\rho \sin \psi 
\text{,}
\end{equation*}
one has 
\begin{align*}
Q_2 &=\rho \bigl( \theta \cos ( \omega \tau I_n+\psi )
-c^{-1}\omega \sin ( \omega \tau I_n+\psi ) \bigr) \text{,} \\
Q_1 &=-\rho \cos ( \omega \tau I_n+\psi ) \text{.}
\end{align*}

Then $Q_1^{-1}Q_2=-\theta +c\omega ^{-1}\tan ( \omega \tau I_n+\psi
) $, for every $\tau $ such that $\det \cos ( \omega \tau
I_n+\psi ) \neq 0$.

Hence 
\begin{align*}
Q_1 &=\rho \tilde{\rho}\sin \zeta ( -\theta ,c\omega ^{-1},\omega \tau
I_n+\psi ) \text{,} \\
Q_2 &=\rho \tilde{\rho}\cos \zeta ( -\theta ,c\omega ^{-1},\omega \tau
I_n+\psi ) \text{,}
\end{align*}
with $\zeta $ defined by \eqref{m} and 
\begin{equation*}
\tilde{\rho}=\sqrt{\cos ^2( \omega \tau I_n+\psi ) + \bigl( \theta
\cos ( \omega \tau I_n+\psi ) -c^{-1}\omega \sin ( \omega
\tau I_n+\psi ) \bigr) ^2}\text{.}
\end{equation*}

As $Q_1=r\sin \varphi $, $Q_2=r\cos \varphi $, one has 
\begin{equation*}
r=\rho \tilde{\rho}\text{,\quad}\varphi =\zeta ( -\theta ,c\omega
^{-1},\omega \tau I_n+\psi ) \text{.}
\end{equation*}
As 
\begin{equation*}
\lim_{\sigma \rightarrow +\infty }\zeta ( -\theta ,c\omega ^{-1},\sigma
) =-\infty \text{,}
\end{equation*}
the first part of the lemma follows.

Consider now the case $\lambda <d/e$. Define $\omega =\sqrt{c(
d-\lambda e) }$. Then 
\begin{align*}
Q_2 &=\theta \bigl( ( \cosh \omega \tau )\, \alpha _0-c\omega
^{-1}( \sinh \omega \tau )\, \beta _0\bigr) \\
&\quad -( \cosh \omega \tau )\, \beta _0+c^{-1}\omega ( \sinh
\omega \tau )\, \alpha _0\text{,} \\
Q_1 &=-( \cosh \omega \tau )\, \alpha _0+c\omega ^{-1}( \sinh
\omega \tau )\, \beta _0\text{.}
\end{align*}
Defining $\eta $ and $\varrho $, $\det \varrho \neq 0$, such that 
\begin{equation*}
\alpha _0=\varrho \cos \eta \text{,\quad}\beta _0=\varrho \sin \eta \text{,}
\end{equation*}
Then 
\begin{equation*}
Q_2^{-1}Q_1=\frac{-\cos \eta +c\omega ^{-1}( \tanh \omega \tau )
\sin \eta }{\bigl( \theta +c^{-1}\omega ( \tanh \omega \tau )\bigr) 
\cos \eta -\bigl( \theta c\omega ^{-1}( \tanh \omega \tau
) +1\bigr) \sin \eta }
\end{equation*}
Hence, for every $\tau \in \left] 0,t\right] $, there exists a $\lambda _{*}$
such that, for $\lambda \leq \lambda _{*}$, 
\begin{equation*}
\left\| Q_2^{-1}Q_1\right\| \leq \left( -\left| \theta \right| +c^{-1}\omega
\left( \tanh \omega \tau \right) \right) ^{-1}\text{,}
\end{equation*}
and 
\begin{equation*}
\lim_{\lambda \rightarrow -\infty }\left\| Q_2^{-1}Q_1\right\| =0
\end{equation*}

For $\tau _{*}>0$, this convergence is uniform in $\left[ \tau _{*},t\right] 
$. From this, the last part of the lemma follows.
\end{proof}

\begin{theorem}
\label{T9}Consider the general case for $C_0$, $D$, $E$ and $\delta $. Then,
for every $\tau \in \left] 0,t\right] $, 
\begin{equation*}
\lim_{\lambda \rightarrow +\infty }\varphi \left( \tau ,\lambda \right)
=-\infty \text{,\quad}\lim_{\lambda \rightarrow -\infty }\tan \varphi
\left( \tau ,\lambda \right) =0\text{,}
\end{equation*}
and $\varphi \left( \tau ,\lambda \right) $ is a strictly decreasing
function of $\lambda$.

Moreover, the eigenvalues of $\varphi \left( \tau ,\lambda \right) $
converge to constant functions on $\left] 0,t\right] $, as $\lambda
\rightarrow -\infty $.
\end{theorem}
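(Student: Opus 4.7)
The plan is to establish the three assertions in order: strict monotonicity in $\lambda$ from formula \eqref{q}, the two infinite limits via a sandwich with the constant-coefficient problem of the preceding lemma, and finally constancy of the limit eigenvalues in $\tau$.

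\textbf{Strict monotonicity.} Formula \eqref{q} gives $C_2(\tau,\lambda)=-\int_0^{\tau}X(\sigma)E(\sigma)X^{\ast}(\sigma)\,d\sigma$ with $X(\sigma)=\alpha_0 Q_c^{\ast}(\sigma)-\beta_0 Q_s^{\ast}(\sigma)$. I would first show $C_2<0$ strictly for every $\tau\in\left]0,t\right]$. If $v\neq 0$ satisfied $X^{\ast}(\sigma)v\equiv 0$ on $[0,\tau]$, then at $\sigma=0$ we get $\alpha_0^{\ast}v=0$; using $B_0=0$, $\dot Q_c(0)=0$ and $\dot Q_s(0)=C_0(0)$ we have $\dot X(0)=-\beta_0 C_0(0)$, so $C_0(0)\beta_0^{\ast}v=0$, giving $\beta_0^{\ast}v=0$; combined with $\alpha_0\alpha_0^{\ast}+\beta_0\beta_0^{\ast}=I_n$ this forces $v=0$, a contradiction. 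Since $E\geq e_-I_n>0$, this yields $C_2(\tau,\lambda)<0$ strictly. The $\lambda$-analogue of Theorem \ref{T1}, applicable because $C_2$ is $C^1$ in $\lambda$ and of constant sign, then gives strict decrease of the eigenvalues of $\varphi(\tau,\cdot)$.

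\textbf{Sandwich and the limit at $+\infty$.} Since $C_0,D,E,\delta$ are continuous on the compact interval $[0,t]$, fix positive constants $c_{\pm}$, $e_{\pm}$ and real $d,\theta$ such that $c_{-}I_n\leq C_0\leq c_{+}I_n$, $e_{-}I_n\leq E\leq e_{+}I_n$, $-dI_n\leq D\leq dI_n$ and $-\theta I_n\leq\delta\leq\theta I_n$. Interpolating linearly in a parameter $\mu\in[0,1]$ between the general problem and either of the two constant-coefficient bounds, the derivative $S_0'$ takes the form $\diag\bigl((D-dI_n)+\lambda(E-e_{\pm}I_n),\,C_0-c_{\pm}I_n\bigr)$. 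For $|\lambda|$ sufficiently large the $\lambda(E-e_{\pm}I_n)$ term dominates the bounded $(D-dI_n)$, so the sign hypothesis of Theorem \ref{T8} (or its reversed-sign version) holds. This produces the symmetric-matrix sandwich $\varphi_{\text{const},-}(\tau,\lambda)\leq\varphi(\tau,\lambda)\leq\varphi_{\text{const},+}(\tau,\lambda)$, which passes to eigenvalues. The lemma supplies $\varphi_{\text{const},+}\to -\infty$ as $\lambda\to+\infty$, whence $\varphi(\tau,\lambda)\to -\infty$.

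\textbf{The limit at $-\infty$ and constancy.} For $\lambda\to-\infty$ both constant bounds converge to $\varphi^{-,\pm}$ with $\tan\varphi^{-,\pm}=0$, trapping $\varphi(\tau,\lambda)$; monotonicity in $\lambda$ then forces a finite pointwise limit $\varphi^{-}(\tau)$. To prove $\tan\varphi^{-}(\tau)=0$, I would run a WKB-type asymptotic analysis: for $\lambda\ll 0$ the fundamental solutions of \eqref{r} behave like $\exp\bigl(\pm\int_0^{\sigma}\sqrt{|\lambda|}\,(C_0 E)^{1/2}\bigr)$, and because $\dot Q=C_0P$ gives $P\sim\sqrt{|\lambda|}\,C_0^{-1}(C_0E)^{1/2}Q$, the matrix $Z=\alpha_0 P_c^{\ast}-\beta_0 P_s^{\ast}$ dominates $X$ by a factor of order $\sqrt{|\lambda|}$ on every subinterval $[\tau_{\ast},t]$ with $\tau_{\ast}>0$. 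Since $Q_2=X\delta+Z\sim Z$ and $Q_1=-X$, one obtains $\tan\varphi=Q_2^{-1}Q_1\to 0$ uniformly in $\tau\in[\tau_{\ast},t]$. Uniform convergence makes each $\tau\mapsto\varphi_j^{-}(\tau)$ continuous; since $\tan\varphi^{-}=0$ confines the eigenvalues to the discrete set $\pi\mathbb{Z}$, continuity on the connected interval $\left]0,t\right]$ forces each eigenvalue to be a constant function.

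\textbf{Main obstacle.} The sandwich by constant coefficients handles the $+\infty$ limit cleanly, but at $-\infty$ it only confines $\varphi^{-}(\tau)$ to a band that may contain several multiples of $\pi$; the uniform WKB-type estimate of $Q_2^{-1}Q_1$ on $[\tau_{\ast},t]$, controlling the cross-effects of the non-constant coefficients, is the technical heart of the argument.
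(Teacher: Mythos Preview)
Your treatment of strict monotonicity and of the $+\infty$ limit is essentially the paper's: the paper also invokes $C_2<0$ from \eqref{q} and then compares with a constant-coefficient problem via Theorem~\ref{T8}, exactly as you do (your extra argument that $X^{\ast}(\sigma)v\equiv 0$ forces $v=0$ is a nice addition the paper leaves implicit).

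Where you diverge is the $-\infty$ limit. You correctly see that sandwiching $\varphi$ itself only traps the limit in a band containing possibly several multiples of $\pi$, and you propose to close the gap by a matrix WKB estimate showing $Q_2^{-1}Q_1\to 0$. The paper does something more economical: it stays inside the comparison framework and sandwiches $Q_2^{-1}Q_1$ \emph{directly}. Using two interpolations (one with $\delta_4=\theta I_n$, one with $\delta_4=-\theta I_n$) it first arranges that for $\lambda$ below some $\lambda_\ast$ the matrices $\cos\varphi_i(\tau,\lambda,\mu)$ are nonsingular along the whole interpolation, hence $Q_2$ is invertible; then formula~\eqref{o} together with~\eqref{p} gives that $\mu\mapsto Q_2^{-1}Q_1$ is monotone, yielding
\[
\tan\varphi_2(\tau,\lambda,1)\;<\;Q_2^{-1}Q_1\;<\;\tan\varphi_1(\tau,\lambda,1),
\]
and the lemma on constant coefficients makes both ends tend to $0$. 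This avoids any asymptotic analysis of the fundamental solutions.

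Your WKB route is in principle workable, but as written it is only heuristic: for an $n\times n$ system with non-commuting $C_0(\sigma)$ and $E(\sigma)$ the ansatz $\exp\bigl(\pm\sqrt{|\lambda|}\int_0^\sigma (C_0E)^{1/2}\bigr)$ is not literally a solution, and turning it into a uniform estimate on $[\tau_\ast,t]$ requires a genuine diagonalisation/Levinson-type argument that you have not supplied. So the ``technical heart'' you flag is a real gap in your proposal, whereas the paper sidesteps it entirely by reusing the monotonicity identity~\eqref{o}.
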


\begin{proof}
As $C_2$, defined by formula \eqref{q}, is $<0$, $\varphi \left( \tau
,\lambda \right) $ is a strictly decreasing function of $\lambda $, for
every $\tau \in \left] 0,t\right] $.

For $\lambda >0$, choose $\theta >\left\| \delta \right\| $, $d\geq D$, $%
0<e\leq E$, $0<c\leq C_0$, with $\theta ,d,$ $e,c\in \mathbb{R}$.

We use now Theorem \ref{T8}. Put $\delta _3=\delta $, $\delta _4=\theta I_n$%
, $A_3=-D+\lambda E$, $A_4=\left( -d+\lambda e\right) I_n$, $C_3=C_0$, $%
C_4=cI_n$.

Then, from Theorem \ref{T8}, one concludes that 
\begin{equation*}
\varphi \left( \tau ,\lambda \right) \equiv \varphi \left( \tau ,\lambda
,0\right) <\varphi \left( \tau ,\lambda ,1\right) \text{,}
\end{equation*}
and the first formula of the theorem is proved.

For $\lambda <0$, choose $\theta >\left\| \delta \right\| $, $d\geq D$, $%
e\geq E$, $0<c\leq C_0$, with $\theta ,d,$ $e,c\in \mathbb{R}$.

We use again Theorem \ref{T8}. Put $\delta _3=\delta $, $\delta _4=\theta
I_n $, $A_3=-D+\lambda E$, $A_4=\left( -d+\lambda e\right) I_n$, $C_3=C_0$, $%
C_4=cI_n$.

Then, from Theorem \ref{T8}, one concludes that 
\begin{equation*}
\varphi _1\left( \tau ,\lambda ,0\right) \equiv \varphi \left( \tau ,\lambda
\right) \equiv \varphi \left( \tau ,\lambda ,0\right) <\varphi \left( \tau
,\lambda ,1\right) \equiv \varphi _1\left( \tau ,\lambda ,1\right) \text{,}
\end{equation*}
the eigenvalues of $\varphi \left( \tau ,\lambda \right) $ are bounded as $%
\lambda \rightarrow -\infty $.

For $\lambda <0$, choose $\theta >\left\| \delta \right\| $, $d\leq D$, $%
0<e\leq E$, $c\geq C_0$, with $\theta ,d,$ $e,c\in \mathbb{R}$.

We use once more Theorem \ref{T8}. Put $\delta _3=\delta $, $\delta
_4=-\theta I_n$, $A_3=-D+\lambda E$, $A_4=( -d+\lambda e) I_n$, $%
C_3=C_0$, $C_4=cI_n$.

Then, from Theorem \ref{T8}, one concludes that 
\begin{equation*}
\varphi _2( \tau ,\lambda ,0) \equiv \varphi ( \tau ,\lambda
) \equiv \varphi ( \tau ,\lambda ,0) >\varphi ( \tau
,\lambda ,1) \equiv \varphi _2( \tau ,\lambda ,1) \text{.}
\end{equation*}

Choose $\lambda _{*}$ the minimum of the $\lambda <0$ such that $\det \cos
\varphi _1( \tau ,\lambda ,\mu ) \linebreak[0]=0$ or $\det \cos \varphi
_1( \tau ,\lambda ,\mu ) =0$, with $\mu \in \left[ 0,1\right] $.
It is clear that there exists such a $\lambda _{*}$, as $\varphi _1$ and $%
\varphi _2$ are bounded near $\lambda =-\infty $. Then, for $\lambda
<\lambda _{*}$ and $\mu \in \left[ 0,1\right] $, $\det \cos \varphi _1(
\tau ,\lambda ,\mu ) \neq 0$, $\det \cos \varphi _2( \tau
,\lambda ,\mu ) \neq 0$. Hence, $\det Q_2( \tau ,\lambda ,\mu
) \neq 0$ in both cases.

As, from \eqref{o} and \eqref{p}, $\frac d{d\mu }Q_2^{-1}Q_1>0$ in the first
case and $<0$ in the second one, one obtains that, for $\lambda 
<\lambda_{*} $, 
\begin{equation*}
\tan \varphi_2( \tau ,\lambda ,1) <Q_2^{-1}Q_1<
\tan \varphi_1( \tau ,\lambda ,1) \text{.}
\end{equation*}
Therefore 
\begin{equation*}
\left\| Q_2^{-1}Q_1\right\| <\max \left\{ \left\| \tan \varphi _1( \tau
,\lambda ,1) \right\| ,\left\| \tan \varphi _2( \tau ,\lambda
,1) \right\| \right\} \text{.}
\end{equation*}
From Theorem \ref{T8}, one concludes that
\begin{equation*}
\lim_{\lambda \rightarrow -\infty }\left\| Q_2^{-1}Q_1\right\| =0\text{.}
\end{equation*}

Then, for $\tau >0$, 
\begin{equation*}
\lim_{\lambda \rightarrow -\infty }\tan \varphi
_1( \tau ,\lambda ,\mu ) =0
\text{ \ and }
\lim_{\lambda \rightarrow
-\infty }\tan \varphi _2( \tau ,\lambda ,\mu ) =0.
\end{equation*}
As $\lim_{\lambda \rightarrow -\infty }\varphi _1( \tau ,\lambda ,1) $
and $\lim_{\lambda \rightarrow -\infty }\varphi _2( \tau ,\lambda
,1) $\ are constant in $\left] 0,t\right] $, and the eigenvalues of
these limit functions are integer multiple of $\pi $, the continuity of the
functions $\varphi _1$\ and $\varphi _2$\ implies the last part of the
theorem.
\end{proof}

Finally we have the following theorem:

\begin{theorem}
For the Sturm-Liouville equation \eqref{r}, subject to conditions \eqref{s},
there are an infinite number of eigenvalues $\lambda _{j,0}<\lambda
_{j,1}<\lambda _{j,2}<\cdots <\lambda _{j,k}<\cdots $, $j=1,2,\ldots ,n$,
with $\lim_{k\rightarrow \infty }\lambda _{j,k}=+\infty $.

The eigenfunctions can be described as follows. There exists a matrix
function $Q_1( \tau ,\lambda ) =r( \tau ,\lambda )
\sin \varphi ( \tau ,\lambda ) $, such that $\det r( \tau
,\lambda ) \linebreak[0]\neq 0$ and $\varphi ( \tau ,\lambda ) $ is
symmetric. The matrix functions $r$ and $\varphi $ are continuous. Consider
the $\varphi $\ eigenvalues $\varphi _j( \tau ,\lambda ) $ and
eigenvectors $e_j( t,\lambda _{j,k}) $. Then the eigenfunction
corresponding to $\lambda _{j,k}$ is $Q_1( \tau ,\lambda _{j,k})\linebreak[0]
e_j( t,\lambda _{j,k}) $ and $\sin \varphi _j( \tau ,\lambda
_{j,k}) $ has exactly $k$ zeros on $\left] 0,t\right[ $.
\end{theorem}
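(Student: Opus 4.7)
My plan combines the polar representation $Q_{1}=r\sin\varphi$, $Q_{2}=r\cos\varphi$ of the preceding Sturm--Liouville example with the monotonicity $C_{2}<0$ from formula \eqref{q} and the limit behaviour established in Theorem \ref{T9}.

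\textbf{Existence and indexing of eigenvalues.} The eigenvalues of the Sturm--Liouville problem are exactly those $\lambda$ for which $\det Q_{2}(t,\lambda)=0$; since $\det r(t,\lambda)\neq 0$, this is equivalent to the existence of an eigenvalue $\varphi_{j}(t,\lambda)$ of the symmetric matrix $\varphi(t,\lambda)$ of the form $\pi/2+k^{\prime}\pi$ for some integer $k^{\prime}$. Because $C_{2}(\tau,\lambda)<0$ on $\left]0,t\right]$ by \eqref{q}, the same implicit-function argument used in the proof of Theorem \ref{T2} makes each $\varphi_{j}(t,\cdot)$ a strictly decreasing $C^{1}$ function of $\lambda$. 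Theorem \ref{T9} gives $\lim_{\lambda\to+\infty}\varphi_{j}(t,\lambda)=-\infty$ and $\lim_{\lambda\to-\infty}\varphi_{j}(t,\lambda)=m_{j}\pi$ for some integer $m_{j}$. The intermediate value theorem then implies that $\varphi_{j}(t,\cdot)$ attains each value $\pi/2+k^{\prime}\pi$ with $k^{\prime}\leq m_{j}-1$ exactly once; reindexing these solutions in increasing order yields the sequence $\lambda_{j,0}<\lambda_{j,1}<\cdots$ with $\lambda_{j,k}\to+\infty$.

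\textbf{Eigenfunctions.} For each $\lambda_{j,k}$, let $e_{j}(t,\lambda_{j,k})$ be a unit eigenvector of $\varphi(t,\lambda_{j,k})$ for the eigenvalue $\pi/2+(m_{j}-1-k)\pi$, so that $\cos\varphi(t,\lambda_{j,k})\,e_{j}=0$ and hence $Q_{2}(t,\lambda_{j,k})\,e_{j}=0$. Using the explicit formulas for the columns of $\Phi=L_{1}\Phi_{0}^{\ast}L_{2}$, in particular $Q_{1}=-\alpha_{0}Q_{c}^{\ast}+\beta_{0}Q_{s}^{\ast}$, one verifies that $Q_{1}(\tau,\lambda_{j,k})\,e_{j}$ is a non-trivial solution of \eqref{r} satisfying both boundary conditions in \eqref{s}; non-triviality follows from $\det r(t,\lambda_{j,k})\neq 0$ and $\sin\varphi_{j}(t,\lambda_{j,k})=\pm 1$.

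\textbf{Zero counting, the main obstacle.} To prove that $\sin\varphi_{j}(\tau,\lambda_{j,k})$ has exactly $k$ zeros on $\left]0,t\right[$, I would use a Sturm-type homotopy in $\lambda$. For $\lambda$ sufficiently negative, Theorem \ref{T9} shows $\varphi_{j}(\tau,\lambda)$ is uniformly close to the constant value $m_{j}\pi$ on compact subsets of $\left]0,t\right]$, so $\sin\varphi_{j}(\cdot,\lambda)$ has no zeros there. As $\lambda$ grows, the decreasing function $\varphi_{j}(t,\lambda)$ passes through the values $(m_{j}-1)\pi,(m_{j}-2)\pi,\ldots$ interlaced with the eigenvalues $\lambda_{j,k}$ (where $\varphi_{j}(t,\lambda)=\pi/2+(m_{j}-1-k)\pi$); at each passage through an integer multiple of $\pi$, the implicit function theorem applied to the equation $\varphi_{j}(\tau,\lambda)=m\pi$, together with the strict $\lambda$-monotonicity $\partial\varphi_{j}/\partial\lambda<0$, produces a new zero of $\sin\varphi_{j}(\cdot,\lambda)$ emerging from $\tau=t$ into $\left]0,t\right[$. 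The real work is to show that these zeros persist --- they cannot escape through $\tau=0$, where $\varphi_{j}(0,\lambda)$ is $\lambda$-independent, nor can they coalesce --- so that at $\lambda=\lambda_{j,k}$ exactly $k$ zeros have accumulated from the $k$ previous passes of $\varphi_{j}(t,\cdot)$ through multiples of $\pi$.
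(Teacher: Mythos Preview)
Your existence/indexing and eigenfunction sections are correct and match the paper's argument.

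For the zero count, however, you have left the heart of the matter open, and the paper's route is both different and complete. The ingredient you are missing is Theorem~\ref{T6}. In the present setup $(L_{2})_{22}=0$, $\det(L_{2})_{12}\neq 0$ and $C_{0}>0$, so Theorem~\ref{T6} says that whenever $\varphi_{j}(\tau_{0},\lambda)=l\pi$ for some integer $l$, the function $\tau\mapsto\varphi_{j}(\tau,\lambda)$ is strictly decreasing near $\tau_{0}$. Hence, for \emph{fixed} $\lambda$, $\varphi_{j}(\cdot,\lambda)$ crosses each level $l\pi$ at most once and stays below afterwards. The paper then argues at a single value $\lambda=\lambda_{j,k}$: since $\varphi_{j}(t,\lambda_{j,k})=(l_{j}-k-\tfrac12)\pi$ (your $m_{j}$ is the paper's $l_{j}$) and $\varphi_{j}(0,\lambda_{j,k})>(l_{j}-1)\pi$, the intermediate value theorem forces exactly one crossing of each of the levels $(l_{j}-1)\pi,\ldots,(l_{j}-k)\pi$ on $\left]0,t\right[$, giving exactly $k$ zeros of $\sin\varphi_{j}(\cdot,\lambda_{j,k})$. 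The bound at $\tau=0$ is itself obtained via Theorem~\ref{T6}: pick $\tau_{*}>0$ and $\lambda_{*}$ with $\varphi_{j}(\tau_{*},\lambda_{*})=(l_{j}-1)\pi$; then $\varphi_{j}(0,\lambda_{*})>(l_{j}-1)\pi$ by the $\tau$-monotonicity, and $\varphi_{j}(0,\cdot)$ is constant.

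Your $\lambda$-homotopy is not wrong in spirit, but the two obstacles you flag --- non-coalescence of zeros and non-escape through $\tau=0$ --- both require $\partial\varphi_{j}/\partial\tau\neq 0$ at the zeros, which is exactly Theorem~\ref{T6}. Once you invoke it, the paper's fixed-$\lambda$ argument is shorter and avoids tracking the zero curves in the $(\tau,\lambda)$ plane altogether.
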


\begin{proof}
Consider $\varphi ( \tau ,\lambda ) $ and its eigenvalues $%
\varphi _j( \tau ,\lambda ) $, $j=1,2,\linebreak[0]\ldots ,\linebreak[0]n$.
Then, from
Theorem \ref{T9}, $\varphi _j( \tau ,\lambda ) $ is strictly
decreasing in $\lambda $, $\lim_{\lambda \rightarrow +\infty }\varphi
_j( \tau ,\lambda ) =-\infty $, and there exists $l_j\in \mathbb{Z}$%
, such that $\lim_{\lambda \rightarrow -\infty }\varphi _j( \tau
,\lambda ) =l_j\pi $, for $\tau \in \left] 0,t\right] $.

From Theorem \ref{T6}, whenever $\varphi _j( \tau _l,\lambda )
=l\pi $, for some $\tau _l\in \left] 0,t\right[ $, then $\varphi _j(
\tau ,\lambda ) $ is a decreasing function of $\tau $ in a
neighborhood of $\tau _l$. Then, $\varphi _j( \tau ,\lambda )
<l\pi $ for $\tau >\tau _l$\ and $\varphi _j( \tau ,\lambda )
>l\pi $ for $\tau <\tau _l$.

Clearly there exists a $\lambda _{j,k}$ such that $\varphi _j(
t,\lambda _{j,k}) =( l_j-k-\frac 12)\,\pi $, for $%
k=0,1,2,\ldots $.

For $\tau _{*}>0$, there exists $\lambda _{*}$ such that $\varphi _j(
\tau _{*},\lambda _{*}) =( l_j-1)\, \pi $. Hence, for $\tau
<\tau _{*}$, $\varphi _j( \tau _{*},\lambda _{*}) >(
l_j-1) \,\pi $. Therefore $\varphi _j( 0,\lambda _{*})
>( l_j-1)\, \pi $. As $\lambda \mapsto \varphi _j( 0,\lambda
) $ is constant, it follows that $\varphi _j( 0,\lambda )
>( l_j-1)\, \pi $ for every $\lambda $.

Define $\tau _m$, $m=1,2,\ldots ,k$, $\varphi _j( \tau _m,\lambda
_{j,k}) =( l_j-m) \,\pi $. The points $\tau _m$\ are the
unique points where $\sin \varphi _j( \tau ,\lambda _{j,k}) =0$
for $\tau \in \left] 0,t\right] $.
\end{proof}

\section*{Acknowledgements}

\noindent The Mathematical Physics Group is supported by the
portuguese Foundation for Science and Technology (FCT)

\appendix
\section{}

\begin{proposition}
Let $n=1$. $L_0+L_1\Phi L_2$ is symplectic for every symplectic matrix $\Phi 
$ is equivalent to $\left( \det L_0\right) +\left( \det L_1\right) \left(
\det L_2\right)\linebreak[0]=1$ and $L_1^{*}JL_0JL_2^{*}=0$.

If $L_0+L_1\Phi L_2$ is symplectic for every symplectic matrix $\Phi $, one
of the following situations happens

a) $L_0$ is symplectic and $\det L_1=\det L_2=0$, with $L_1\neq 0$ and $%
L_2\neq 0$.

b) $L_0$ is symplectic and $L_1=0$ or $L_2=0$.

c) $L_0=0$ and $\det L_1\det L_2=1$.
\end{proposition}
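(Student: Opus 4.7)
The plan is first to reduce the symplecticity condition to a polynomial identity in the entries of $\Phi$, then to extract the two stated algebraic conditions, and finally to classify the possibilities by casework. Since $n=1$, a $2\times 2$ matrix $\Psi$ is symplectic if and only if $\det\Psi=1$ (this is the identity $\Psi J\Psi^{\ast}=(\det\Psi)J$ noted in the introduction). Thus the requirement is that $\det(L_{0}+L_{1}\Phi L_{2})=1$ for every $\Phi$ with $\det\Phi=1$. Using the standard $2\times 2$ expansion $\det(A+B)=\det A+\det B+\operatorname{tr}(A\,\mathrm{adj}(B))$ together with the identity $\mathrm{adj}(B)=-JB^{\ast}J$, one finds, for $B=L_{1}\Phi L_{2}$ with $\det\Phi=1$,
\begin{equation*}
\det(L_{0}+L_{1}\Phi L_{2})=\det L_{0}+\det L_{1}\det L_{2}-\operatorname{tr}\bigl(L_{1}^{\ast}JL_{0}JL_{2}^{\ast}\Phi^{\ast}\bigr),
\end{equation*}
where I have used cyclicity of the trace.

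Setting $M=L_{1}^{\ast}JL_{0}JL_{2}^{\ast}$, the required condition becomes $\operatorname{tr}(M\Phi^{\ast})=\det L_{0}+\det L_{1}\det L_{2}-1$ for every $\Phi$ with $\det\Phi=1$. The key step, and the main subtle point, is to show that a linear functional $\Phi\mapsto\operatorname{tr}(M\Phi^{\ast})$ which is constant on $\{\Phi:\det\Phi=1\}$ must vanish identically. I would argue as follows: differentiating at $\Phi=I$ along any direction tangent to this hypersurface, i.e.\ any matrix $Y$ with $\operatorname{tr}Y=0$, gives $\operatorname{tr}(MY)=0$; an elementary entrywise check forces $M=\lambda I$ for some $\lambda\in\mathbb{R}$. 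The residual requirement $\lambda\operatorname{tr}\Phi=\mathrm{const}$ is then violated by $\Phi=\diag(t,t^{-1})$ unless $\lambda=0$, so $M=0$. Substituting back, $\det L_{0}+\det L_{1}\det L_{2}=1$. The converse direction, that the two stated conditions together imply symplecticity of $L_{0}+L_{1}\Phi L_{2}$ for every symplectic $\Phi$, is immediate from the displayed formula. This proves the equivalence.

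For the classification into (a), (b), (c), I would split on which of the $L_{j}$ vanish. If $L_{1}=0$ or $L_{2}=0$, the condition $L_{1}^{\ast}JL_{0}JL_{2}^{\ast}=0$ is automatic and $\det L_{0}+\det L_{1}\det L_{2}=1$ collapses to $\det L_{0}=1$, so $L_{0}$ is symplectic; this is case (b). If instead $L_{0}=0$, the equation reduces to $\det L_{1}\det L_{2}=1$; this is case (c). In the remaining subcase all three of $L_{0},L_{1},L_{2}$ are nonzero. I would first show $L_{0}$ must be invertible: if $\det L_{0}=0$, then $\det L_{1}\det L_{2}=1$ forces $L_{1},L_{2}$ invertible, and $L_{1}^{\ast}JL_{0}JL_{2}^{\ast}=0$ would then yield $JL_{0}J=0$, hence $L_{0}=0$, a contradiction. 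Once $L_{0}$ is invertible, so is $JL_{0}J$, and then $L_{1}^{\ast}JL_{0}JL_{2}^{\ast}=0$ rules out either $L_{1}$ or $L_{2}$ being invertible (if $L_{1}$ were, then $L_{2}^{\ast}=0$, against $L_{2}\neq 0$). Hence $\det L_{1}=\det L_{2}=0$, and then $\det L_{0}=1$, placing us in case (a). The only real technical ingredient is the trace-constancy argument in the second paragraph; the rest is routine bookkeeping with $2\times 2$ matrices.
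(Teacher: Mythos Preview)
Your proof is correct, and you carry out the (a)/(b)/(c) classification more explicitly than the paper, which simply asserts that ``the proposition follows now without problems.'' The argument structures differ, however, in a way worth noting.

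The paper works directly with the symplectic condition $\Psi J\Psi^{\ast}=J$ for $\Psi=L_{0}+L_{1}\Phi L_{2}$, expands the product, and uses $LJL^{\ast}=(\det L)J$ to simplify. The key step is then the parity trick: since $-\Phi$ is also symplectic when $n=1$, substituting $\pm\Phi$ and adding/subtracting immediately separates the constant part $(\det L_{0})+(\det L_{1})(\det L_{2})=1$ from the linear part, which reads ``$L_{1}\Phi L_{2}JL_{0}^{\ast}$ is symmetric for every symplectic $\Phi$.'' A spanning argument (sums of symplectic matrices give all $2\times 2$ matrices) upgrades this to all $\Phi$, and an elementary bilinear characterisation yields $L_{1}^{\ast}JL_{0}JL_{2}^{\ast}=0$.

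You instead expand the \emph{determinant} via the adjugate identity $\mathrm{adj}(B)=-JB^{\ast}J$, reducing everything to the scalar condition that $\operatorname{tr}(M\Phi^{\ast})$ is constant on $\mathrm{SL}_{2}(\mathbb{R})$, and then use a tangent-space (differentiation) argument plus a one-parameter test to force $M=0$. This is perfectly valid. It is marginally heavier than necessary: the paper's $\Phi\mapsto-\Phi$ trick would give you $\operatorname{tr}(M\Phi^{\ast})=0$ outright, after which testing three or four concrete symplectic $\Phi$'s kills $M$ without any calculus. Conversely, your determinant route avoids the spanning step entirely. Both approaches exploit the same $n=1$ miracle, just packaged differently.
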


\begin{proof}
\begin{equation*}
\begin{split}
&( L_0+L_1\Phi L_2) J( L_0^{*}+L_2^{*}\Phi ^{*}L_1^{*}) \\
&= L_0JL_0^{*}+L_0JL_2^{*}\Phi ^{*}L_1^{*}+L_1\Phi L_2JL_0^{*}+L_1\Phi
L_2JL_2^{*}\Phi ^{*}L_1^{*} \\
&= ( \det L_0) J+L_0JL_2^{*}\Phi ^{*}L_1^{*}+L_1\Phi
L_2JL_0^{*}+( \det L_1) ( \det L_2) J = J.
\end{split}
\end{equation*}

As this must be true for $\Phi $ and $-\Phi $, one has 
\begin{align*}
( \det L_0) +( \det L_1) ( \det L_2) &=1,\\
L_0JL_2^{*}\Phi ^{*}L_1^{*}+L_1\Phi L_2JL_0^{*} &= 0.
\end{align*}

Hence, $L_1\Phi L_2JL_0^{*}$ is symmetric, for every symplectic matrix $\Phi 
$. As $L_1\left( \Phi _1+\Phi _2\right) L_2JL_0^{*}$ is also symmetric for
any two symplectic matrices, $L_1\Phi L_2JL_0^{*}$ is symmetric even if $%
\Phi $ is not symplectic. As $K_1\Phi K_2$ is symmetric for every matrix $%
\Phi $ if and only if $K_2JK_1=0$, one easily concludes that $%
L_1^{*}JL_0JL_2^{*}=0$. The proposition follows now without problems.
\end{proof}

Let $n=1$ and $f_{11},f_{12},f_{21},f_{22}:\mathbb{R}^{4}\rightarrow \mathbb{R}$
four affine functions. Then, if 
\begin{equation*}
L=
\begin{bmatrix}
f_{11}( \Phi _{11},\Phi _{12},\Phi _{21},\Phi _{22}) & 
f_{12}( \Phi _{11},\Phi _{12},\Phi _{21},\Phi _{22}) \\ 
f_{21}( \Phi _{11},\Phi _{12},\Phi _{21},\Phi _{22}) & 
f_{22}( \Phi _{11},\Phi _{12},\Phi _{21},\Phi _{22})
\end{bmatrix}
\end{equation*}
is symplectic for every symplectic matrix $\Phi $, one has that $L$ is one
of the forms 
\begin{equation*}
L=L_{0}+L_{1}\Phi L_{2}\text{,\quad}L=L_{0}+L_{1}\Phi ^{\ast }L_{2}\text{.}
\end{equation*}

This can be proved by an explicit, and tedious, computation.

Notice that, following the proposition $L_{0}$ is either $0$ or symplectic.
If $L_{0}=0$, then $L_{1}$ and $L_{2}$ can be chosen such that $\left| \det
L_{1}\right| =\left| \det L_{2}\right| =1$, $( \det L_{1}) (
\det L_{2}) =1$. In this case they are either both symplectic or both
antisymplectic.

In our problem $\Phi _{11}\equiv Q_{c}( t) =Q_{c}^{\ast }(
t) $, $\Phi _{12}\equiv Q_{s}( t) =Q_{s}^{\ast }(
t) $, $\Phi _{21}\equiv P_{c}( t) =P_{c}^{\ast }(
t) $, $\Phi _{22}\equiv P_{s}( t) =P_{s}^{\ast }(
t) $. Hence 
\begin{equation*}
f_{11}( \Phi _{11},\Phi _{12},\Phi _{21},\Phi _{22})
=x_{0}+x_{1}\Phi _{11}+x_{2}\Phi _{12}+x_{3}\Phi _{21}+x_{4}\Phi _{22}
\end{equation*}
where
\begin{align*}
x_{0} &=R( \beta _{0}\alpha _{1}-\alpha _{0}\beta _{1}+\delta
_{0}\gamma _{1}-\gamma _{0}\delta _{1}) \\
x_{1} &=R( \alpha _{0}\delta _{1}-\delta _{0}\alpha _{1}) \\
x_{2} &=R( \delta _{0}\beta _{1}-\beta _{0}\delta _{1}) \\
x_{3} &=R( \alpha _{0}\gamma _{1}-\gamma _{0}\alpha _{1}) \\
x_{4} &=R( \gamma _{0}\beta _{1}-\beta _{0}\gamma _{1})
\end{align*}
where $R=R_{0}R_{1}$ is a real eigenvalue dependent parameter, $R\neq 0$.

Notice that $x_{1}x_{4}-x_{2}x_{3}=R^{2}( \delta _{1}\gamma _{0}-\delta _{0}\gamma
_{1}) ( \beta _{1}\alpha _{0}-\alpha _{1}\beta _{0})$.
As $x_{0}=2R( \beta _{0}\alpha _{1}-\alpha _{0}\beta _{1})
=2R( \delta _{0}\gamma _{1}-\gamma _{0}\delta _{1}) $, one has
that 
\begin{equation*}
x_{1}x_{4}-x_{2}x_{3}=4^{-1}x_{0}^{2}\text{.}
\end{equation*}

Let $L_{0}=I_{2}$, the $2\times 2$ unit matrix. Then $L$ can be of the
following three forms:

a) $f_{11}=1$, $f_{22}=1$, $f_{12}=0$;

b) $f_{11}=1$, $f_{22}=1$, $f_{21}=0$;

c) there exists an $\kappa \neq 0$ such that $f_{22}-1=-(
f_{11}-1) $, $f_{12}=\kappa ( f_{11}-1) $, $f_{12}=-\kappa
^{-1}( f_{11}-1) $.

The case where $L_{0}$ is symplectic but $\neq I_{2}$ is easily derived from
this one.

Let now $L_{0}=0$. Then $x_{0}=0$ and $x_{1}x_{4}-x_{2}x_{3}=0$.

There are five possible situations: a) $x_{1}\neq 0$, b) $x_{1}=0,x_{4}\neq
0,x_{3}=0$, c) $x_{1}=0,x_{4}\neq 0,x_{2}=0$, d) $x_{1}=0,x_{4}=0,x_{2}=0$,
e) $x_{1}=0,x_{4}=0,x_{3}=0$. 
\begin{equation*}
\centering
\begin{tabular}{llp{4em}p{4em}ll}
\toprule
& a) & b) & c) & d) & e) \\
\cmidrule{2-6}
$( L_{1})_{11}$ & $a$ & $ax_{2}x_{4}^{-1}$ & $0$ & $0$ & $a$ \\[6pt]
$( L_{1})_{12}$ & $ax_{3}x_{1}^{-1}$ & $a$ & $a$ & $a$ & $0$ \\[6pt]
$( L_{1})_{21}$ & $b$ & $-\nu a^{-1}\linebreak+bx_{2}x_{4}^{-1}$ & $-\nu
a^{-1}$ & $-\nu a^{-1}$ & $b$ \\[23pt]
$( L_{1})_{22}$ & $\nu a^{-1}+bx_{3}x_{1}^{-1}$ & $b$ & $b$ & $b$
& $\nu a^{-1}$ \\[6pt]
$( L_{2})_{11}$ & $a^{-1}x_{1}$ & $0$ & $a^{-1}x_{3}$ & $%
a^{-1}x_{3}$ & $0$ \\[6pt]
$( L_{2})_{12}$ & $c$ & $-\nu ax_{4}^{-1}$ & $-\nu
ax_{4}^{-1}\linebreak+cx_{3}x_{4}^{-1}$ & $c$ & $-\nu ax_{2}^{-1}$ \\[23pt]
$( L_{2})_{21}$ & $a^{-1}x_{2}$ & $a^{-1}x_{4}$ & $a^{-1}x_{4}$
& $0$ & $a^{-1}x_{2}$ \\[6pt]
$( L_{2})_{22}$ & $\nu ax_{1}^{-1}+cx_{2}x_{1}^{-1}$ & $c$ & $c$
& $\nu ax_{3}^{-1}$ & $c$ \\
\bottomrule
\end{tabular}
\end{equation*}
where $a$, $b$ and $c$ are real eigenvalue dependent parameters, $a\neq 0$,
and $\nu =\pm 1$; $\nu =1$ in the symplectic case, $\nu =-1$ in the
antisymplectic case.

\end{document}